\definecolor{darkred}{rgb}{0.4,0.1,0.1}
\definecolor{darkblue}{rgb}{0.1,0.1,0.4}
\theoremstyle{plain}% default
\newtheorem{hyp}{Hypothesis}[section]
\newtheorem{thm}{Theorem}[section]
\newtheorem*{question}{Problem}
\newtheorem*{thm*}{Theorem}
\newtheorem{lem}[thm]{Lemma}
\newtheorem{prop}[thm]{Proposition}
\newtheorem{cor}[thm]{Corollary}
\newtheorem*{cor*}{Corollary}
\newtheorem{dfn}[thm]{Definition}
\theoremstyle{remark}
\newtheorem{remark}[thm]{Remark}
\theoremstyle{plain}
\newcommand{\sign}{{\rm sign\,}}
\newcommand{\be}{\begin{equation}}
\newcommand{\ee}{\end{equation}}
\newcommand{\beu}{\begin{equation*}}
\newcommand{\eeu}{\end{equation*}}
\newcommand{\besu}{\begin{equation*}
\begin{aligned}}
\newcommand{\eesu}{\end{aligned}
\end{equation*}}
\newcommand{\bes}{\begin{equation}
\begin{aligned}}
\newcommand{\ees}{\end{aligned}
\end{equation}}
\newcommand\cP{\mathcal P}
\newcommand\frc{\mathfrak c}
\renewcommand\frq{\mathfrak q}
\newcommand\ov{\overline}
\newcommand\void[1]{}
\def\frt{{\mathfrak t}}
      \def\dC{{\mathbb C}}
\def\dD{{\mathbb D}}
   \def\dN{{\mathbb N}}   
      \def\dR{{\mathbb R}}
   \def\cE{{\mathcal E}}
\def\cP{{\mathcal P}}
\newcommand{\dom}{\mathrm{dom}\,}
\newcommand{\mes}{\mathrm{mes}\,}
\numberwithin{equation}{section}
\title[Bounds on the norms of extension operators]{Lower bounds on the norms of extension operators for Lipschitz domains}
\author{Vladimir Lotoreichik}
\address{Technische Universit\"{a}t Graz,
Institut f\"{u}r Numerische Mathematik\\
Steyrergasse 30,
8010 Graz, Austria}
\email{lotoreichik@math.tugraz.at}
\begin{document}

\maketitle

%\today

\begin{abstract}
Let $\Omega\subset\dR^d$ be a bounded or an unbounded Lipschitz domain.
In this note we address the problem of continuation of functions from the Sobolev space $H^1(\Omega)$ 
up to functions in the Sobolev space $H^1(\dR^d)$ via a linear operator. The minimal possible norm of such an operator is estimated from below in terms 
of spectral properties of self-adjoint Robin Laplacians on domains 
$\Omega$ and $\dR^d\setminus\ov\Omega$.  
Another estimate of this norm is also given, where spectral properties of Schr\"odinger operators with the $\delta$-interaction
supported on the hypersurface $\partial\Omega$ are involved. 
General results are illustrated with examples.
\end{abstract}

\section{Introduction}

Let $\Omega\subseteq\dR^d$, $d\ge 2$, be a domain with the Lebesgue space $L^2(\Omega)$ and the Sobolev space $H^1(\Omega)$ defined in the usual way.
We use the following standard norm on $H^1(\Omega)$
\[
\|f\|_{1}^2 := \|\nabla f\|^2_{L^2(\Omega;\dC^d)}+\|f\|^2_{L^2(\Omega)}.
\]
A linear extension operator is defined below.
\begin{dfn}
\label{dfn:extension}
A linear operator $E \colon H^1(\Omega)\rightarrow H^1(\dR^d)$ satisfying
the conditions
\begin{equation}
\label{extension}
(E f)|_{\Omega} = f\quad\text{and}\quad \|E\|_{1} := \sup_{\|f\|_{1} = 1}\|Ef\|_{1} <\infty
\end{equation}
is called an extension operator.
\end{dfn}
The operator $E$ provides a continuation of  functions in  $H^1(\Omega)$ up to  functions in $H^1(\dR^d)$. 
An extension operator can be constructed, in particular, for any bounded Lipschitz domain or for a hypograph
of a uniformly Lipschitz function. There are different constructions known. For hypographs
one can do a ''crude'' construction as in \cite[Theorem A.1]{McLean} via reflection of the function
with respect to the boundary. For bounded Lipschitz domains first construction is due to Calder\'{o}n \cite{C61}. 
More involved construction is due to Stein \cite{S71}, which 
has some additional important properties. 
In the case of bounded $C^\infty$-smooth domains there is a simpler construction due to Seeley \cite{S64}. 
Note that for some domains with cusps at the boundary it can be shown that no extension operator exists \cite[Exercise A.4]{McLean}.

For particular extension operators one can compute or estimate $\|E\|_1$. 
Clearly, one can claim that $\|E\|_1\in(1,+\infty)$. 
It turns out that the restriction $(Ef)|_{\Omega} = f$ does not allow 
to construct the extension operator $E$ with the norm as close to 1 as one wants.
It was probably Mikhlin who first posed the following problem.
\begin{question}
Given a domain $\Omega$ for which there exist some extension operators. Compute
\begin{equation}
\label{alpha1}
\cE(\Omega) := \inf_{E} \|E\|_1,
\end{equation}
where the infinum is taken over all extension operators.
\end{question}
In \cite{M78, M79} Mikhlin proposed an algorithm for finding $\cE(\Omega)$.
He computed $\cE(\dD)$ exactly with the aid of this algorithm, where $\dD = \{(x,y)\in\dR^2\colon x^2+y^2 < 1\}$. For general domains 
it seems to be impossible to compute $\cE(\Omega)$ and only estimates can be provided. In this paper we are interested in the lower bounds on $\cE(\Omega)$. For a bounded Lipschitz domain 
$\Omega\subset\dR^d$ the estimate 
\begin{equation}
\label{classical}
\cE(\Omega) \ge \sqrt{\frac{{\rm cap}\,\Omega}{\mes \Omega}}
\end{equation}
is known, where ${\rm cap}\, \Omega$ is the capacity of $\Omega$. This estimate is used by Maz'ya and Poborchii in \cite{MP96} and by Kalyabin in \cite{K99}. Also it is formulated as an independent statement in \cite[Lemma 3.5]{B99}. 
We present a way to estimate the value $\cE(\Omega)$ for a bounded or  an unbounded domain $\Omega$ without employing capacity. Instead we use
the knowledge of the spectra of self-adjoint Robin Laplacians and of the spectra of 
self-adjoint Schr\"odinger operators with $\delta$-interactions. 
This connection has not appeared before in an explicit form, and it might be of some interest also taking into account recent progress on Robin Laplacians, see \cite{AM12, AW03, GM09, LP08} and the references therein, and recent progress on Schr\"odinger operators with $\delta$-interactions, see the review paper \cite{E08} and also \cite{BEL13, BLL13}. Using this connection we give two different estimates of the value $\cE(\Omega)$ from below. However we do not claim that our estimates are always sharper than \eqref{classical} in the case of bounded domains.

So let $\Omega$ be a bounded or an unbounded Lipschitz domain as in Definition~\ref{def:Lipschitz}.
Consider the densely defined symmetric sesquilinear form
\[
\frt^\Omega_\beta[f,g] := (\nabla f,\nabla g)_{L^2(\Omega;\dC^d)} - \beta(f|_{\partial\Omega},g|_{\partial\Omega})_{L^2(\partial\Omega)},\qquad \dom\frt^{\Omega}_\beta := H^1(\Omega),
\]
with $\beta \ge 0$.
The form  $\frt^\Omega_\beta$ is closed and lower-semibounded \cite{AW03,AM12}. 
Therefore, it  induces by the first representation theorem
a self-adjoint operator in the Hilbert space $L^2(\Omega)$ denoted by $-\Delta^\Omega_\beta$ and usually called Robin Laplacian. 
We define the function 
\[
F_{\Omega}(\beta) := \inf\sigma(-\Delta^\Omega_\beta).
\]
It turns out that the equation
\[
F_{\Omega}(\beta) = -1
\]
has a unique strictly positive solution, 
which we denote by $\beta(\Omega)$.
Let the value $\beta(\dR^d\setminus\ov\Omega) > 0$ be a unique solution of the complementary equation
\[
F_{\dR^d\setminus\ov\Omega}(\beta) = -1
\]
corresponding to the self-adjoint Robin Laplacian acting on $\dR^d\setminus\ov\Omega$.
We prove that
\begin{equation}
\label{est1}
\cE(\Omega) \ge \sqrt{1 + \frac{\beta(\dR^d\setminus\ov\Omega)}{\beta(\Omega)}}
\end{equation}
with $\cE(\Omega)$ as in \eqref{alpha1}.  
This result implies, in particular, that
\[
\max\{\cE(\Omega),\cE(\dR^d\setminus\ov\Omega)\} \ge \sqrt{2}.
\]
Now consider another densely defined symmetric sesquilinear form
\[
\frt_{\alpha}^{\partial\Omega}[f,g] := (\nabla f,\nabla g)_{L^2(\dR^d;\dC^d)} - \alpha(f|_{\partial\Omega},g|_{\partial\Omega})_{L^2(\partial\Omega)},
\qquad \dom\frt_{\alpha}^{\partial\Omega} := H^1(\dR^d),
\]
with $\alpha \ge 0$.
The form  $\frt_{\alpha}^{\partial\Omega}$ is closed and lower-semibounded \cite{BEL13, BEKS94}. 
Therefore, it induces by the first representation theorem
a self-adjoint operator in the Hilbert space $L^2(\dR^d)$ denoted by $-\Delta_{\alpha}^{\partial\Omega}$ and usually called  the Schr\"odinger operator with $\delta$-interaction. We define the function 
\[
F_{\partial\Omega}(\alpha) := \inf\sigma(-\Delta_{\alpha}^{\partial\Omega}).
\]
It turns out as well that the equation
\[
F_{\partial\Omega}(\alpha) = -1
\]
has a unique strictly positive solution denoted by $\alpha(\partial\Omega)$ and the estimate
\begin{equation}
\label{est2}
\cE(\Omega) \ge \sqrt\frac{\alpha(\partial\Omega)}{\beta(\Omega)}
\end{equation}
holds. 

The bounds, which are proved in this paper,
work for bounded and unbounded domains simultaneously in the same form. In some cases the spectra of Robin Laplacians and $\delta$-operators are known explicitly or can be easily estimated, as we will see in our examples, that implies also explicit lower bounds on the smallest
possible norm of extension operators.

Our general results are applied to the extension problems for the wedge-type domain  $\Omega_\varphi\subset\dR^2$ with angle $\varphi\in(0,\pi]$
and for the rectangle $\Pi_{a,b} = (0,a)\times(0,b)$ with $a,b  >0$.
In these examples elementary computations lead to explicit estimates of 
$\cE(\Omega_\varphi)$ and $\cE(\Pi_{a,b})$ from below.
For the wedges we get
\begin{equation*}
\label{estimates}
\cE(\Omega_\varphi)\ge\sqrt{1+ \frac{1}{\sin(\varphi/2)}}.
\end{equation*}
For the rectangles we obtain
\begin{equation*}
\label{estimates}
\cE(\Pi_{a,b})\ge\sqrt{1+2\Big(\frac{1}{a} +\frac{1}{b}\Big)}.
\end{equation*}
In the case of wedges we also compare our lower bound with an upper bound on $\cE(\Omega_\varphi)$, which is obtained via estimation of the norm of the reflection
operator. In particular, we get the approximate asymptotic behaviour
\begin{equation}
\label{asymptotics}
\sqrt{2}+\frac{\sqrt{2}}{32}\theta^2 + o(\theta^2)\le\cE(\Omega_{\pi - \theta}) \le \sqrt{2} + \frac{\sqrt{2}}{4}\theta + o(\theta), \qquad \theta\rightarrow 0+.
\end{equation}
Previously known results on the asymptotics of the value $\cE(\Omega)$
were proved in the case that some metric parameter of the domain tends to zero. Namely, Mikhlin obtained in \cite{M81} asymptotic
behaviour of $\cE(B_r)$ for a ball $B_r$ of a radius $r > 0$ in the limit $r \rightarrow 0+$. Maz'ya and Poborchiy \cite{MP96} considered asymptotic behaviour of  $\cE(\Omega_\omega)$ for cylinder-type domain $\Omega_\omega$ with a cross section $\omega$ in the limit, when $\omega$ shrinks to a point. Kalyabin  \cite{K99} estimated $\cE(\Omega)$ for planar convex domains in the limit ${\rm diam}\,\Omega\rightarrow 0+$.  In all these papers
the value $\cE(\Omega)$ tends to $+\infty$ as the parameter tends to zero
and upper and lower bounds have the same order of growth. 
Our asymptotics \eqref{asymptotics} for wedges  is of a slightly different nature. It can be computed that $\cE(\Omega_\pi)= \sqrt{2}$ and we show how the minimal possible norm of the extension operator changes under
small deformation of the boundary of the domain. 

It remains to outline the structure of the paper. In Section~\ref{sec:prelim} we introduce self-adjoint Robin Laplacians on Lipschitz domains and self-adjoint Schr\"odinger operators with $\delta$-interactions on Lipschitz hypersurfaces and prove some of their basic properties. In Section~\ref{sec:main} we obtain our main results on the estimation of the value $\cE(\Omega)$. Section~\ref{sec:examples} is devoted to examples: in Subsection~\ref{ssec:example1} we consider an example with wedges and in Subsection~\ref{ssec:example2} we give an example with rectangles.
\subsection*{Acknowledgements}
The author gratefully acknowledges financial supported by the Austrian Science Fund (FWF): project P 25162-N26, and thanks Christian K\"uhn for the substantial help with the manuscript.
\section{Preliminaries}
\label{sec:prelim}
In this section we define a class of Lipschitz domains. Further, we introduce self-adjoint Robin Laplacians on these domains and self-adjoint Schr\"odinger operators with $\delta$-interactions supported on manifolds, which separate the Euclidean space into two such Lipschitz domains. Certain elementary spectral properties of these self-adjoint operators are proved.
\subsection{Lipschitz domains}
First we define special Lipschitz domains.
\begin{dfn}
\label{dfn:special}
The domain $\Omega\subset\dR^d$ with $d\ge 2$ is called special Lipschitz domain if there exist a coordinate system and a uniformly Lipschitz function\footnote{There exists $L > 0$ such that for any $x,y\in\dR^{d-1}$ the condition $|\varphi(x) - \varphi(y)|\le L \|x - y\|$ holds} $\varphi\colon \dR^{d-1} \rightarrow\dR$ such that in
this coordinate system
\begin{equation}
\label{special}
\Omega = \big\{(x,t)\colon x\in\dR^{d-1}, t > \varphi(x)\big\}.
\end{equation}
\end{dfn}
\noindent Throughout the paper we deal with a class of domains with Lipschitz boundary as in \cite[\S VI.3]{S71}.
\begin{dfn}
\label{def:Lipschitz}
The domain $\Omega\subset\dR^d$ is called Lipschitz domain or 
minimally smooth domain
if there exist $\varepsilon >0 $, a natural number $N$, a constant $M >0$ and a  countable family $\{U_j\}_j$ of open sets
such that:
\begin{itemize}\setlength{\itemsep}{1.2ex}
\item[\rm (i)] if $x\in\partial\Omega$, then $B_\varepsilon(x) \subset U_j$ for some $j$;
$B_\varepsilon(x)$ is the ball in $\dR^d$ with the center $x$ and the radius $\varepsilon > 0$;
\item [\rm (ii)] at most $N$ of the $U_j$'s have nonempty intersection;
\item [\rm (iii)] for each $j$ there exists a special Lipschitz domain $\Omega_j$ such that
$U_j\cap\Omega = U_j\cap\Omega_j$ and $\|\nabla \varphi_j\|_{L^\infty} \le M$, 
where $\Omega_j$ is defined by $\varphi_j$ as in \eqref{special}.
\end{itemize}
\end{dfn}
\noindent The Sobolev space $H^1(\Omega)$ is defined as usual see \cite[Chapter 3]{McLean} with the norm
\[
\|f\|_{1}^2 := \|\nabla f\|^2_{L^2(\Omega;\dC^d)} + \|f\|^2_{L^2(\Omega)}.
\]
It is known that for any $f\in H^1(\Omega)$ its trace $f|_{\partial\Omega}$  
is well-defined as a function in $L^2(\partial\Omega)$, see \cite{D96, Co88,M87}. 
\subsection{Self-adjoint Robin Laplacians}
We start with a standard statement on the Neumann sesquilinear form.
\begin{lem}\cite[\S VII.1.2]{EE}.
The densely defined, non-negative symmetric sesquilinear form
\[
\frt^\Omega_{\rm N}[f,g] := (\nabla f,\nabla g)_{L^2(\Omega;\dC^d)},\qquad \dom\frt^\Omega_{\rm N} := H^1(\Omega),
\]
is closed. 
\end{lem}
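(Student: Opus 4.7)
The plan is to reduce the closedness of $\frt^\Omega_{\rm N}$ to the completeness of $H^1(\Omega)$ in its usual Sobolev norm. Recall that a non-negative symmetric sesquilinear form $\frt$ with dense domain in a Hilbert space $\cH$ is closed precisely when its domain is a Hilbert space with respect to the form norm
\[
\|f\|_\frt^2 := \frt[f,f] + \|f\|_{\cH}^2.
\]
Thus I would first verify the elementary hypotheses (density, non-negativity, symmetry) and then show that the form norm coincides with the standard Sobolev norm, so that closedness is immediate from the completeness of $H^1(\Omega)$.

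The steps I would carry out, in order, are the following. First, I would note that $H^1(\Omega)$ contains $C_c^\infty(\Omega)$, hence is dense in $L^2(\Omega)$, so $\frt^\Omega_{\rm N}$ is densely defined. Second, non-negativity is clear from $\frt^\Omega_{\rm N}[f,f]=\|\nabla f\|_{L^2(\Omega;\dC^d)}^2\ge 0$, and symmetry follows from the Hermitian property of the $L^2$ inner product. Third, by definition of the standard $H^1$-norm,
\[
\|f\|_\frt^2 = \frt^\Omega_{\rm N}[f,f] + \|f\|_{L^2(\Omega)}^2 = \|\nabla f\|_{L^2(\Omega;\dC^d)}^2 + \|f\|_{L^2(\Omega)}^2 = \|f\|_1^2,
\]
so the form norm on $\dom\frt^\Omega_{\rm N}$ is exactly the Sobolev norm.

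Finally, closedness reduces to the classical fact that $\bigl(H^1(\Omega),\|\cdot\|_1\bigr)$ is a Hilbert space: given a form-Cauchy sequence $\{f_n\}\subset H^1(\Omega)$ that converges in $L^2(\Omega)$ to some $f$, the $\|\cdot\|_1$-Cauchy property provides a limit $g\in H^1(\Omega)$ with $\|f_n-g\|_1\to 0$; uniqueness of limits in $L^2(\Omega)$ forces $f=g\in H^1(\Omega)$ and $\frt^\Omega_{\rm N}[f_n-f,f_n-f]\to 0$, which is the definition of closedness. The only step that is not completely routine is the completeness of $H^1(\Omega)$ itself, but this is a standard and well-known fact (see, e.g., \cite{McLean}), so I would simply invoke it rather than reprove it. There is no real obstacle here; the content of the lemma is essentially a reformulation of the Hilbert space structure of $H^1(\Omega)$ in the language of sesquilinear forms.
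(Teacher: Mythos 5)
Your proof is correct and is exactly the standard argument behind the cited reference \cite{EE}: the form norm of $\frt^\Omega_{\rm N}$ coincides with the Sobolev norm $\|\cdot\|_1$, so closedness is precisely the completeness of $H^1(\Omega)$. The paper itself offers no proof (it only cites \cite[\S VII.1.2]{EE}), and your argument fills that in correctly with no gaps.
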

\noindent Consider the perturbation of the form $\frt^\Omega_{\rm N}$ living on the boundary 
\begin{equation}
\label{frt}
\frt^\Omega_\beta[f,g] := (\nabla f,\nabla g)_{L^2(\Omega;\dC^d)} - \beta(f|_{\partial\Omega},g|_{\partial\Omega})_{L^2(\partial\Omega)},\quad \dom\frt^\Omega_\beta := H^1(\Omega),
\end{equation}
with $\beta\in\dR$. Note that $\frt^\Omega_0 = \frt^\Omega_{\rm N}$.
The form $\frt^\Omega_\beta$ is already known to be closed and lower-semi\-bounded
for bounded Lipschitz domains, see \cite[Theorem 3.3, Proposition 4.1]{AW03} and also \cite{AM12}. In the next theorem we prove this fact for domains precisely as in Definition~\ref{def:Lipschitz}. 
This result is, of course, expected and for us it is only
an auxiliary fact.  Our proof of this fact uses a result contained in \cite{BEL13}, which is proved with the aid of Stein's extension operator.
\begin{lem}\cite{BEL13}
\label{lem:BEL}
Let $\Omega\subset\dR^d$ be a bounded or unbounded Lipschitz domain as in Definition~\ref{def:Lipschitz}.  Then for any $\varepsilon >0$ 
there exists a constant $C(\varepsilon) > 0$ such that
\[
\|f|_{\partial\Omega}\|_{L^2(\partial\Omega)}^2 \le \varepsilon \|\nabla f\|_{L^2(\Omega;\dC^d)}^2 + C(\varepsilon)\|f\|^2_{L^2(\Omega)}
\]
holds for all $f\in H^1(\Omega)$.
\end{lem}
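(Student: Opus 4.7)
The plan is to combine Stein's extension theorem with a standard localization-and-flattening argument. Given $f\in H^1(\Omega)$, first apply Stein's extension operator, which is tailored precisely to the class of minimally smooth domains in Definition~\ref{def:Lipschitz}, to obtain $\tilde f\in H^1(\dR^d)$ with $\tilde f|_\Omega=f$ and $\|\tilde f\|_{H^1(\dR^d)}\le C_E\|f\|_{H^1(\Omega)}$, where $C_E$ depends only on $\Omega$. Since the traces of $\tilde f$ from either side of $\partial\Omega$ coincide with $f|_{\partial\Omega}$, it suffices to establish an inequality of the same form
\[
\|\tilde f|_{\partial\Omega}\|_{L^2(\partial\Omega)}^2\le \eta\,\|\nabla\tilde f\|_{L^2(\dR^d;\dC^d)}^2+\wt C(\eta)\,\|\tilde f\|_{L^2(\dR^d)}^2
\]
for any $\eta>0$ and all $\tilde f\in H^1(\dR^d)$. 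Substituting the Stein bound and choosing $\eta$ small enough will then yield the stated inequality on $\Omega$.

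To prove the interpolation-type trace inequality on $\dR^d$, I would localize using the covering $\{U_j\}$ from Definition~\ref{def:Lipschitz}. Fix a partition of unity $\{\chi_j\}$ subordinate to $\{U_j\}$ with $\|\nabla\chi_j\|_\infty$ uniformly bounded, and write $\tilde f=\sum_j\chi_j\tilde f$. In each chart, the bi-Lipschitz map associated with the special Lipschitz domain $\Omega_j$ flattens $\partial\Omega\cap U_j$ onto a subset of the hyperplane $\{t=0\}\subset\dR^d$, with Jacobian factors bounded in terms of the Lipschitz constant $M$. After this change of variables the problem reduces, for a smooth compactly supported $g$ on $\ov{\dR^d_+}$, to the half-space trace identity
\[
|g(x',0)|^2=-\int_0^\tau\partial_t\bigl(\chi(t)|g(x',t)|^2\bigr)\,dt,
\]
with a cutoff $\chi\in C^\infty_c([0,\tau))$ satisfying $\chi(0)=1$. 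Expanding the derivative, integrating in $x'\in\dR^{d-1}$, and applying Cauchy--Schwarz together with Young's inequality $2ab\le\eta a^2+\eta^{-1}b^2$ produce
\[
\int_{\dR^{d-1}}|g(x',0)|^2\,dx'\le \eta\,\|\partial_t g\|_{L^2(\dR^d_+)}^2+\bigl(\eta^{-1}+\|\chi'\|_\infty\bigr)\|g\|_{L^2(\dR^d_+)}^2,
\]
where $\eta>0$ is arbitrary. Transporting this estimate back through the bi-Lipschitz charts, summing over $j$ using the finite overlap property (at most $N$ charts meet at any point), and absorbing all geometric constants into a single $\wt C(\eta)$ delivers the desired $\dR^d$-trace inequality.

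The main obstacle is the bookkeeping of constants: one must track how the bi-Lipschitz changes of variables, the derivatives of the partition-of-unity cutoffs, the finite overlap multiplicity $N$, and Stein's extension constant $C_E$ enter the estimate, and then choose the cutoff width $\tau$ (equivalently $\eta$) last, sufficiently small, so that the coefficient of $\|\nabla f\|^2$ becomes any prescribed $\varepsilon>0$ at the cost of a finite but possibly large $C(\varepsilon)$. No single step is deep; the only delicate point is that $\varepsilon$ must remain a free parameter throughout.
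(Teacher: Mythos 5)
The paper itself gives no proof of this lemma --- it is quoted from \cite{BEL13} with the remark that the proof there uses Stein's extension operator --- so your overall strategy (Stein extension, then localization, flattening, and the one-dimensional trace identity with a free parameter $\eta$) is exactly the route the paper points to, and the half-space computation and the finite-overlap bookkeeping are fine. However, there is one genuine gap in the reduction step. You control the extension only by $\|\tilde f\|_{H^1(\dR^d)}\le C_E\|f\|_{H^1(\Omega)}$, and then substitute into
\[
\|\tilde f|_{\partial\Omega}\|_{L^2(\partial\Omega)}^2\le \eta\,\|\nabla\tilde f\|_{L^2(\dR^d;\dC^d)}^2+\wt C(\eta)\,\|\tilde f\|_{L^2(\dR^d)}^2 .
\]
With only the $H^1\to H^1$ bound, the zeroth-order term is estimated by $\wt C(\eta)\,C_E^2\bigl(\|\nabla f\|_{L^2(\Omega;\dC^d)}^2+\|f\|_{L^2(\Omega)}^2\bigr)$, so the coefficient in front of $\|\nabla f\|^2$ is at least $\wt C(\eta)C_E^2$, which blows up as $\eta\to 0$; you cannot make it equal to a prescribed $\varepsilon$. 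The repair is easy but must be stated: Stein's operator is bounded from $W^{k,p}(\Omega)$ to $W^{k,p}(\dR^d)$ for all $k\ge 0$, in particular from $L^2(\Omega)$ to $L^2(\dR^d)$, so $\|\tilde f\|_{L^2(\dR^d)}\le C_0\|f\|_{L^2(\Omega)}$ and the bad term goes entirely into $C(\varepsilon)\|f\|_{L^2(\Omega)}^2$. Alternatively, you can dispense with the extension altogether: in each chart integrate $\partial_t\bigl(\chi(t)|f|^2\bigr)$ from the boundary \emph{into} $\Omega_j$ along the vertical direction, which yields the interpolation inequality directly on $\Omega$ and avoids the issue.
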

\begin{prop}
\label{frtclosed}
The symmetric, densely defined sesquilinear form $\frt^\Omega_\beta$ from \eqref{frt} 
is closed and lower-semibounded in $L^2(\Omega)$ for all $\beta\in\dR$.
\end{prop}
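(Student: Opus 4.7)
The plan is to realize $\frt^\Omega_\beta$ as a form perturbation of the Neumann form $\frt^\Omega_{\rm N}$, which is already closed and non-negative on $H^1(\Omega)$, and to apply the standard KLMN-type stability theorem for closed semibounded sesquilinear forms under perturbations that are relatively bounded with bound strictly less than one.

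For $\beta = 0$ the assertion is immediate since $\frt^\Omega_0 = \frt^\Omega_{\rm N}$, so fix $\beta \in \dR\setminus\{0\}$ and set
\[
\frb^\Omega_\beta[f,g] := -\beta(f|_{\partial\Omega},g|_{\partial\Omega})_{L^2(\partial\Omega)}, \qquad \dom \frb^\Omega_\beta := H^1(\Omega),
\]
so that $\frt^\Omega_\beta = \frt^\Omega_{\rm N} + \frb^\Omega_\beta$ on $H^1(\Omega)$. The trace estimate of Lemma~\ref{lem:BEL} immediately yields, for every $\eps > 0$ and all $f \in H^1(\Omega)$,
\[
|\frb^\Omega_\beta[f]| = |\beta|\,\|f|_{\partial\Omega}\|^2_{L^2(\partial\Omega)} \le |\beta|\eps\, \frt^\Omega_{\rm N}[f] + |\beta| C(\eps) \|f\|^2_{L^2(\Omega)}.
\]
Choosing $\eps < 1/|\beta|$ shows that $\frb^\Omega_\beta$ is $\frt^\Omega_{\rm N}$-bounded with relative form bound $|\beta|\eps < 1$, and since $\eps$ can be taken arbitrarily small, this bound is in fact infinitesimally small.

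By the classical form-perturbation theorem (KLMN), $\frt^\Omega_\beta$ is therefore closed on $\dom \frt^\Omega_\beta = H^1(\Omega) = \dom\frt^\Omega_{\rm N}$ and lower-semibounded, with an explicit lower bound of the form $-|\beta|C(\eps)/(1-|\beta|\eps)$ readable off the above inequality. I expect essentially no obstacle in the argument: the entire analytic content is Lemma~\ref{lem:BEL}, which is assumed. It is worth noting that for $\beta \le 0$ the perturbation $\frb^\Omega_\beta$ is already non-negative, so closedness and lower-semiboundedness follow from those of $\frt^\Omega_{\rm N}$ with no trace inequality needed; the perturbation argument is only essential for the genuinely interesting case $\beta > 0$, where the boundary term can lower the spectrum.
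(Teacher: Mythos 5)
Your proof is correct and follows essentially the same route as the paper: decompose $\frt^\Omega_\beta = \frt^\Omega_{\rm N} + \frt'_\beta$, use Lemma~\ref{lem:BEL} to show the boundary term is infinitesimally form-bounded relative to the Neumann form, and invoke the KLMN/Kato stability theorem (\cite[Chapter VI, Theorem 1.33]{Kato}). The only (harmless) addition is your side remark on $\beta\le 0$, which the uniform KLMN argument already covers.
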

\noindent We denote by $-\Delta_\beta^\Omega$ the self-adjoint operator in $L^2(\Omega)$ corresponding to the sesquilinear form $\frt^\Omega_\beta$ via the first representation theorem \cite[Chapter VI, Theorem 2.1]{Kato}.
\begin{proof}[Proof of Proposition~\ref{frtclosed}]
By Lemma~\ref{lem:BEL} the symmetric sesquilinear form 
\[
\frt'_\beta[f,g] := -\beta(f|_{\partial\Omega},g|_{\partial\Omega})_{L^2(\partial\Omega)},\qquad \dom \frt'_\beta := H^1(\Omega),
\]
is bounded with respect to the form $\frt^\Omega_{\rm N}$ with arbitrarily small form bound. Hence, by \cite[Chapter VI, Theorem 1.33]{Kato} the symmetric densely defined sesquilinear from $\frt^\Omega_\beta = \frt^\Omega_{\rm N} +\frt'_\beta$ is closed and lower-semibounded.
\end{proof}
\noindent Define for $\beta \ge 0$ the function 
\begin{equation}
\label{fOmega}
F_{\Omega}(\beta) := \inf\sigma(-\Delta_\beta^\Omega) = \inf_{\begin{smallmatrix} f\in H^1(\Omega)\\
\|f\|_{L^2(\Omega)}=1\end{smallmatrix}} \frt_\beta^\Omega[f,f].
\end{equation}
In the next proposition we collect some properties of the function $F_{\Omega}$.
\begin{prop}
\label{prop:f}
Let  $\Omega$ be a bounded or unbounded Lipschitz domain as in Definition~\ref{def:Lipschitz}.
Let the function $F_\Omega\colon[0,+\infty)\rightarrow \dR$ be defined as in \eqref{fOmega}.
\begin{itemize}\setlength{\parskip}{1.2mm}
\item [\rm (i)] $F_{\Omega}$ is non-increasing. Moreover, if $F_{\Omega}(\beta_1) < 0$, then for any $\beta_2 > \beta_1$ the strict inequality $F_{\Omega}(\beta_2) < F_{\Omega}(\beta_1)$ holds.
\item [\rm (ii)] $F_{\Omega}$ is continuous.
\item [\rm (iii)] $F_{\Omega}(0) \ge 0$.
\item[\rm (iv)]$\lim_{\beta\rightarrow+\infty} F_{\Omega}(\beta) = -\infty$.
\end{itemize}
\end{prop}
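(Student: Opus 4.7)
My plan is to treat the four items in the order \textnormal{(iii), (i), (iv), (ii)}, using throughout the variational characterization $F_\Omega(\beta)=\inf\{\frt_\beta^\Omega[f,f]:f\in H^1(\Omega),\ \|f\|_{L^2(\Omega)}=1\}$ together with Proposition~\ref{frtclosed}, which guarantees that $F_\Omega(\beta)\in\dR$ for every $\beta\ge 0$. Part \textnormal{(iii)} is immediate: for $\beta=0$ the form is $\frt_0^\Omega[f,f]=\|\nabla f\|_{L^2(\Omega;\dC^d)}^2\ge 0$.

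For \textnormal{(i)}, I would exploit the elementary identity
\[
\frt_{\beta_2}^\Omega[f,f]=\frt_{\beta_1}^\Omega[f,f]-(\beta_2-\beta_1)\,\|f|_{\partial\Omega}\|_{L^2(\partial\Omega)}^2,
\]
valid for any $f\in H^1(\Omega)$ and $\beta_2>\beta_1\ge 0$. Taking the infimum over normalized $f$ gives monotonicity. For the strict inequality under the extra hypothesis $F_\Omega(\beta_1)<0$, I pick a minimizing sequence $\{f_n\}$ with $\frt_{\beta_1}^\Omega[f_n,f_n]\to F_\Omega(\beta_1)$. Since $\|\nabla f_n\|^2\ge 0$, the identity forces $\beta_1\|f_n|_{\partial\Omega}\|^2\ge|F_\Omega(\beta_1)|-o(1)$, so the boundary norms stay bounded away from zero. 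Plugging this back in yields $F_\Omega(\beta_2)\le F_\Omega(\beta_1)-(\beta_2-\beta_1)|F_\Omega(\beta_1)|/\beta_1<F_\Omega(\beta_1)$.

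For \textnormal{(iv)}, I would construct a single test function $f_0\in H^1(\Omega)$ with $\|f_0\|_{L^2(\Omega)}=1$ and $\|f_0|_{\partial\Omega}\|_{L^2(\partial\Omega)}>0$. Using that $\partial\Omega$ is locally a Lipschitz graph (Definition~\ref{def:Lipschitz}), one can restrict to $\Omega$ an appropriate smooth compactly supported function on $\dR^d$ that does not vanish identically on a boundary patch, and then normalize. The bound
\[
F_\Omega(\beta)\le\frt_\beta^\Omega[f_0,f_0]=\|\nabla f_0\|^2-\beta\,\|f_0|_{\partial\Omega}\|^2
\]
then tends to $-\infty$ as $\beta\to+\infty$.

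Part \textnormal{(ii)} is the point that requires a little care. The key observation is that for each fixed normalized $f$, the map $\beta\mapsto\frt_\beta^\Omega[f,f]$ is \emph{affine} in $\beta$, so $F_\Omega$ is the pointwise infimum of a family of affine functions, hence concave on $[0,+\infty)$. Because Proposition~\ref{frtclosed} ensures $F_\Omega$ is real-valued, a finite concave function on an interval is automatically continuous on its interior, which gives continuity on $(0,+\infty)$. Continuity at the endpoint $\beta=0$ is obtained by combining the chord bound $F_\Omega(\beta)\ge(1-\beta/\beta')F_\Omega(0)+(\beta/\beta')F_\Omega(\beta')$ (from concavity, with any fixed $\beta'>0$), which yields $\liminf_{\beta\to 0+}F_\Omega(\beta)\ge F_\Omega(0)$, with the trivial monotonicity bound $F_\Omega(\beta)\le F_\Omega(0)$ from \textnormal{(i)}. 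The main obstacle I expect is precisely this endpoint argument and the verification in \textnormal{(iv)} that a trace-nontrivial $H^1$ function exists in the possibly unbounded Lipschitz setting; both are routine but should be spelled out.
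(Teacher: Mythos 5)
Your proposal is correct, and for parts (i), (iii) and (iv) it follows essentially the same route as the paper: the paper likewise proves the strict inequality in (i) by taking a near-minimizer $f_\varepsilon$ of $\frt^\Omega_{\beta_1}$, deriving $\|f_\varepsilon|_{\partial\Omega}\|^2_{L^2(\partial\Omega)}\ge -(F_\Omega(\beta_1)+\varepsilon)/\beta_1$, and inserting it into $\frt^\Omega_{\beta_2}$; and it proves (iv) with a single trace-nontrivial test function, stated just as briefly as you state it. The genuine divergence is in (ii). The paper argues by excluding jumps in two separate steps: a left jump is ruled out by a direct test-function computation, while a right jump is ruled out by showing, via the trace estimate of Lemma~\ref{lem:BEL}, that $\frt^\Omega_{\beta_n}\to\frt^\Omega_{\beta_0}$ in the sense needed for norm resolvent convergence of $-\Delta^\Omega_{\beta_n}$ to $-\Delta^\Omega_{\beta_0}$ (Kato's Theorems VI.3.6, IV.2.23 and IV.3.1), followed by spectral inclusion. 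Your route --- observing that $\beta\mapsto\frt^\Omega_\beta[f,f]$ is affine for each fixed normalized $f$, so that $F_\Omega$ is a pointwise infimum of affine functions and hence concave, finite by Proposition~\ref{frtclosed}, hence continuous on $(0,+\infty)$, with continuity at $\beta=0$ recovered from the chord inequality combined with the monotonicity bound $F_\Omega(\beta)\le F_\Omega(0)$ --- is shorter, entirely elementary, and avoids the resolvent-convergence machinery altogether (the trace estimate enters only indirectly, through the finiteness of $F_\Omega$, which you correctly source from Proposition~\ref{frtclosed}). What the paper's heavier argument buys is nothing extra for this statement; your concavity argument in fact yields slightly more (local Lipschitz continuity on $(0,+\infty)$). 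The one point you flag as needing care in (iv), the existence of a normalized $H^1(\Omega)$ function with nonvanishing trace on an unbounded Lipschitz domain, is dispatched with a bare ``clearly'' in the paper as well, so nothing is missing relative to the published proof.
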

\begin{proof}
\noindent (i) Choose parameters $\beta_2 > \beta_1 \ge 0$. The relation
\[
F_{\Omega}(\beta_2) \le F_{\Omega}(\beta_1)
\]
holds straightforwardly in view of definition \eqref{fOmega}.
It remains to show the strict inequality under the assumption $F_\Omega(\beta_1) < 0$.
Suppose that $F_\Omega(\beta_1) < 0$. 
Then for any $\varepsilon >0$ there exists $f_\varepsilon\in H^1(\Omega)$ with $\|f_\varepsilon\|_{L^2(\Omega)} =1$ such that
\[
\frt^\Omega_{\beta_1}[f_\varepsilon,f_\varepsilon] \le F_\Omega(\beta_1) +\varepsilon.
\]
That implies the estimate
\[
\|f_\varepsilon|_{\partial\Omega}\|^2_{L^2(\partial\Omega)} \ge -\frac{ F_\Omega(\beta_1) +\varepsilon}{\beta_1}.
\]
Therefore, we arrive at
\[
\begin{split}
\frt^\Omega_{\beta_2}[f_\varepsilon,f_\varepsilon] &= \frt_{\beta_1}^\Omega[f_\varepsilon, f_\varepsilon] + (\beta_1 -\beta_2)\|f_\varepsilon|_{\partial\Omega}\|_{L^2(\partial\Omega)}^2\\
&\le \big( F_\Omega(\beta_1)+\varepsilon\big)\big(1 + \tfrac{\beta_2 - \beta_1}{\beta_1}\big).
\end{split}
\]
As a result by the choice of sufficiently small $\varepsilon > 0$ we obtain
\[
F_\Omega(\beta_2) \le \frt^\Omega_{\beta_2}[f_\varepsilon,f_\varepsilon] < F_\Omega(\beta_1).
\]
(ii) 
Let us define left and right limits of $F_\Omega(\cdot)$
\[
F_{\Omega,-}(\beta) := \lim_{x\rightarrow \beta-} F_\Omega(x)\quad\text{and}\quad F_{\Omega,+}(\beta) := \lim_{x\rightarrow \beta+} F_\Omega(x),
\]
which are well-defined because of monotonicity of $F_\Omega$. Then we carry out the proof of this item into two steps.\\
\noindent \emph{Step I:}
Suppose that for some $\beta_0 >0$ the inequality
\begin{equation}
\label{ljump}
F_{\Omega,-}(\beta_0) > F_\Omega(\beta_0)
\end{equation}
holds. Let us fix sufficiently small value $\varepsilon >0$ and choose a function $f_\varepsilon\in H^1(\Omega)$ with $\|f_\varepsilon\|_{L^2(\Omega)}=  1$ such that
\[
\frt_{\beta_0}^\Omega[f_\varepsilon,f_\varepsilon] \le F_\Omega(\beta_0) + \varepsilon. 
\] 
Substituting this function into the form $\frt_x^\Omega$ with $x < \beta_0$ we get
\[
\begin{split}
F_\Omega(x) &\le \frt_x^\Omega[f_\varepsilon,f_\varepsilon] = \frt_\beta^\Omega[f_\varepsilon,f_\varepsilon] + (\beta_0 -x)\|f_\varepsilon|_{\partial\Omega}\|^2_{L^2(\partial\Omega)} \\
&\le  F_\Omega(\beta_0) + \varepsilon+
(\beta_0 -x)\|f_\varepsilon|_{\partial\Omega}\|^2_{L^2(\partial\Omega)}.
\end{split}
\]
Passing to the limit $x\rightarrow \beta_0-$ we get
\[
F_{\Omega,-}(\beta_0) \le F_\Omega(\beta_0) + \varepsilon,
\]
but that contradicts with \eqref{ljump}.\\
\noindent \emph{Step II:}
Suppose that for some $\beta_0 \ge  0$ the inequality
\begin{equation}
\label{rjump}
F_\Omega(\beta_0) > F_{\Omega,+}(\beta_0)  
\end{equation}
holds. Let us fix arbitrarily small $\varepsilon > 0$. Note that due to Lemma~\ref{lem:BEL} we get the estimate
\[
\big|\frt^\Omega_{\beta_0}[f,f]\big| \ge (1 -\beta_0\varepsilon)\|\nabla f\|^2_{L^2(\Omega;\dC^d)} - \beta_0C(\varepsilon)\|f\|^2_{L^2(\Omega)},
\]
which is equivalent to
\begin{equation}
\label{estimate}
\|\nabla f\|^2_{L^2(\Omega;\dC^d)} \le \frac{1}{1-\beta_0\varepsilon}\big|\frt^\Omega_{\beta_0}[f,f]\big|+ \frac{\beta_0C(\varepsilon)}{1-\beta_0\varepsilon}\|f\|^2_{L^2(\Omega)}
\end{equation}
Choose a sequence of non-negative values $\{\beta_n\}$ such that $\beta_n\rightarrow \beta_0+$ mono\-tonously. According to \eqref{estimate} and using Lemma~\ref{lem:BEL} the forms $\frt_{\beta_n}^\Omega$  and  $\frt_{\beta_0}^\Omega$ satisfy the estimate
\[
\begin{split}
\Big|\frt_{\beta_n}^\Omega[f,f]-\frt_{\beta_0}^\Omega[f,f]\Big|& \le \varepsilon|\beta_n-\beta_0|\|\nabla f\|^2_{L^2(\Omega;\dC^d)}+ C(\varepsilon)|\beta_n-\beta_0|\|f\|^2_{L^2(\Omega)} \\
&\le \frac{\varepsilon|\beta_n-\beta_0|}{1-\beta_0\varepsilon}
\big|\frt_{\beta_0}^\Omega[f,f]\big| + \frac{|\beta_n-\beta_0|}{1-\beta_0\varepsilon}C(\varepsilon) \|f\|^2_{L^2(\Omega)}.
\end{split}
\]
for all $f\in H^1(\Omega)$. By \cite[Theorem VI.3.6, Theorem IV.2.23]{Kato} the sequence of operators $-\Delta_{\beta_n}^\Omega$ converges to the operator $-\Delta_{\beta_0}^\Omega$ in the norm resolvent sense. Obviously the inclusion $(-\infty,F_\Omega(\beta_0))\subset\rho(-\Delta_{\beta_0}^\Omega)$ holds. By \cite[Theorem IV.3.1]{Kato} for all $\varepsilon > 0$ there exists $N\in\dN$ such that for all $n\ge N$ the inclusion
\[
(-\infty, F_\Omega(\beta_0)-\varepsilon)\subset \rho(-\Delta_{\beta_n}^\Omega).
\]
holds. By choosing $\varepsilon >0 $ such that $F_\Omega(\beta_0) -\varepsilon > F_{\rm \Omega,+}(\beta_0)$ we arrive at a contradiction with \eqref{rjump}.
Hence, concluding both steps for all $\beta\in\dR_+$ the equality
\[
F_\Omega(\beta) = F_{\Omega,\pm}(\beta)
\]
holds, and the function $F_\Omega$ is continuous.\\
\noindent (iii) 
In view of the definition of $F_\Omega$ in \eqref{fOmega} we may conclude that $F_\Omega(0) \ge 0$. \\
\noindent (iv)
Clearly, there exists a function $f\in H^1(\Omega)$ such that $\|f\|_{L^2(\Omega)} =1$ and that $\|f|_{\partial\Omega}\|_{L^2(\partial\Omega)}^2 >0$, that yields
\[
\lim_{\beta\rightarrow +\infty}F_\Omega(\beta) \le \lim_{\beta\rightarrow +\infty} \frt^\Omega_\beta[f,f] = -\infty.
\]
\end{proof}
\begin{remark}The results of Proposition~\ref{prop:f} for bounded Lipschitz domains follow from \cite[Proposition 3]{AM12}. We require this fact also for unbounded domains and for this reason the full proof is provided.
\end{remark}
\begin{cor}
\label{cor:f}
Let $F_\Omega$ be as in \eqref{fOmega}. Then the equation 
\begin{equation}
\label{eqf}
F_\Omega(\beta) = -1
\end{equation}
on $\beta$ has a unique strictly positive solution denoted by  $\beta(\Omega)$.
\end{cor}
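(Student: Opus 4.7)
The plan is to combine the four properties of $F_\Omega$ listed in Proposition~\ref{prop:f} into a short intermediate value argument together with the strict monotonicity clause.

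For \emph{existence}, I would observe that property (iii) gives $F_\Omega(0)\ge 0 > -1$, while property (iv) guarantees some $\beta_* > 0$ with $F_\Omega(\beta_*) < -1$. Since $F_\Omega$ is continuous on $[0,+\infty)$ by (ii), the intermediate value theorem supplies a point $\beta(\Omega)\in(0,\beta_*)$ at which $F_\Omega(\beta(\Omega)) = -1$. Strict positivity of this solution is immediate from $F_\Omega(0)\ge 0 > -1$.

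For \emph{uniqueness}, I would argue by contradiction. Suppose $0 < \beta_1 < \beta_2$ both satisfy $F_\Omega(\beta_i) = -1$. Because $F_\Omega(\beta_1) = -1 < 0$, the strict-monotonicity clause in property (i) applies and yields $F_\Omega(\beta_2) < F_\Omega(\beta_1) = -1$, contradicting $F_\Omega(\beta_2) = -1$. Hence the positive solution is unique.

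There is no real obstacle here: the entire substance of the corollary has been packaged into Proposition~\ref{prop:f}. The only care needed is to invoke the strict-monotonicity half of (i) rather than mere monotonicity for uniqueness, since $F_\Omega$ could in principle be constant on some initial interval where it is still non-negative; the hypothesis $F_\Omega(\beta_1) < 0$ is exactly what rules out a plateau at the value $-1$.
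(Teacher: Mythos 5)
Your proposal is correct and follows exactly the paper's own argument: existence and strict positivity via $F_\Omega(0)\ge 0$, the limit $F_\Omega(\beta)\to-\infty$, continuity, and the intermediate value theorem; uniqueness via the strict-monotonicity clause of Proposition~\ref{prop:f}\,(i), which applies because any solution has $F_\Omega(\beta)=-1<0$. Your remark that the strict (rather than mere) monotonicity is the essential ingredient for uniqueness is precisely the point the paper's terse phrase ``monotonicity properties'' is relying on.
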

\begin{proof}
The existence and strict positivity of the solution follow from $F_\Omega(0) \ge 0$, $\lim_{\beta\rightarrow+\infty}F_\Omega(\beta) =-\infty$ and from the continuity of $F_\Omega$.
Uniqueness of the solution follows from the monotonicity properties of $F_\Omega$.
\end{proof}

\subsection{Self-adjoint Schr\"odinger operators with $\delta$-interactions}
Let $\Sigma\subset\dR^d$  be a $(d-1)$-dimensional manifold, which separates the Euclidean space $\dR^d$ into two Lipschitz domains $\Omega$ and $\dR^d\setminus\ov\Omega$, where $\partial\Omega = \Sigma$. We shall consider
the symmetric, densely defined sesquilinear form
\[
\frt_{\alpha}^{\partial\Omega}[f,g] := (\nabla f, \nabla g)_{L^2(\dR^d;\dC^d)} -\alpha(f|_{\partial\Omega},g|_{\partial\Omega})_{L^2(\Sigma)},\qquad \dom\frt_{\alpha}^{\partial\Omega} = H^1(\dR^d).
\]
According to \cite{BEL13, BEKS94} the form $\frt_{\alpha}^{\partial\Omega}$ is closed and lower-semibounded. The self-adjoint operator corresponding to the form $\frt_{\alpha}^{\partial\Omega}$ via the first representation theorem will be denoted by $-\Delta_{\alpha}^{\partial\Omega}$. Let us introduce the 
function
\begin{equation}
\label{fSigma}
F_{\partial\Omega}(\alpha) :=\inf\sigma(-\Delta_\alpha^{\partial\Omega}) :=\inf_{\begin{smallmatrix} f\in H^1(\dR^d)\\ \|f\|_{L^2(\dR^d)}=1\end{smallmatrix}} \frt_\alpha^{\partial\Omega}[f,f].
\end{equation}
Next we will formulate without proofs complete analogs of Proposition~\ref{prop:f} and Corollary~\ref{cor:f} for the function $F_{\partial\Omega}$.
\begin{prop}
\label{prop:f}
Let  $\Omega\subset\dR^d$ be a bounded or unbounded Lipschitz domain as in Definition~\ref{def:Lipschitz}.
Let the function $F_{{\partial\Omega}}\colon\dR_+\rightarrow \dR$ be defined as in \eqref{fOmega}.
\begin{itemize}\setlength{\parskip}{1.2mm}
\item [\rm (i)] $F_{\partial\Omega}$ is non-increasing. Moreover, if $F_{\partial\Omega}(\alpha_1) < 0$, then for any $\alpha_2 > \alpha_1$ the strict inequality $F_{\partial\Omega}(\alpha_2) < F_{\partial\Omega}(\alpha_1)$ holds.
\item [\rm (ii)] $F_{\partial\Omega}$ is continuous.
\item [\rm (iii)] $F_{\partial\Omega}(0) = 0$.
\item[\rm (iv)]$\lim_{\alpha\rightarrow+\infty} F_{\partial\Omega}(\alpha) = -\infty$.
\end{itemize}
\end{prop}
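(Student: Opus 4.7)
The plan is to transcribe the proof of Proposition \ref{prop:f} almost verbatim, since the only structural change is that the ambient space is $\dR^d$ rather than $\Omega$ and the form $\frt_\alpha^{\partial\Omega}$ still depends linearly on the parameter, with a boundary correction of the same sign. The one genuinely new ingredient needed is a trace inequality on $H^1(\dR^d)$ controlling $\|f|_{\partial\Omega}\|_{L^2(\partial\Omega)}^2$ by the $H^1(\dR^d)$-norm, which will play the role that Lemma~\ref{lem:BEL} played before.

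First I would derive this inequality by applying Lemma~\ref{lem:BEL} separately to $\Omega$ and to the complementary Lipschitz domain $\dR^d\setminus\ov\Omega$ (both are Lipschitz in the sense of Definition~\ref{def:Lipschitz}, and they share the same boundary hypersurface), then summing: for every $\varepsilon>0$ there exists $C(\varepsilon)>0$ with
\[
\|f|_{\partial\Omega}\|_{L^2(\partial\Omega)}^2 \le \varepsilon\|\nabla f\|_{L^2(\dR^d;\dC^d)}^2+C(\varepsilon)\|f\|^2_{L^2(\dR^d)},\qquad f\in H^1(\dR^d).
\]
This already re-proves (incidentally) that $\frt_\alpha^{\partial\Omega}$ is closed and lower-semibounded, and supplies the quantitative control needed for the continuity argument.

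Next I would address items (i) and (ii) by copying the proof of the corresponding items of Proposition~\ref{prop:f}. For (i) the monotonicity $\alpha_2>\alpha_1\Rightarrow F_{\partial\Omega}(\alpha_2)\le F_{\partial\Omega}(\alpha_1)$ is immediate from $\frt_{\alpha_2}^{\partial\Omega}\le\frt_{\alpha_1}^{\partial\Omega}$ as quadratic forms; the strict version under the assumption $F_{\partial\Omega}(\alpha_1)<0$ follows by picking a near-minimiser $f_\varepsilon$ and using that $\frt_{\alpha_1}^{\partial\Omega}[f_\varepsilon,f_\varepsilon]<0$ forces $\|f_\varepsilon|_{\partial\Omega}\|_{L^2(\partial\Omega)}^2 \ge -(F_{\partial\Omega}(\alpha_1)+\varepsilon)/\alpha_1>0$. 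For (ii) I would run the same two-step contradiction argument (left jump, right jump) with the displayed trace inequality above replacing the one used previously; norm-resolvent convergence of $-\Delta_{\alpha_n}^{\partial\Omega}\to -\Delta_{\alpha_0}^{\partial\Omega}$ along sequences $\alpha_n\to\alpha_0$ follows from \cite[Theorem VI.3.6, Theorem IV.2.23]{Kato}, and one concludes with \cite[Theorem IV.3.1]{Kato} exactly as before.

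Finally, items (iii) and (iv) require small individual remarks. For (iii), note that $-\Delta_0^{\partial\Omega}$ is just the free Laplacian on $\dR^d$, so $\sigma(-\Delta_0^{\partial\Omega})=[0,+\infty)$ and therefore $F_{\partial\Omega}(0)=0$ (this is an equality, not only an inequality, which differs from the $F_\Omega$ case). For (iv) I would choose any $f\in C_c^\infty(\dR^d)$ with $f|_{\partial\Omega}\not\equiv 0$ (easy to produce since $\partial\Omega$ is a nontrivial Lipschitz hypersurface), whence $\frt_\alpha^{\partial\Omega}[f,f]\to -\infty$ linearly in $\alpha$. I do not anticipate a serious obstacle: the only substantive step is upgrading Lemma~\ref{lem:BEL} to the whole space, and this is a one-line consequence of applying it on each side of $\partial\Omega$.
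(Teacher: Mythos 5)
Your proposal is correct and is exactly the argument the paper intends: the paper states this proposition ``without proofs'' as a complete analog of the earlier Proposition~\ref{prop:f} for $F_\Omega$, and your transcription --- with the whole-space trace inequality obtained by applying Lemma~\ref{lem:BEL} on $\Omega$ (or on both $\Omega$ and $\dR^d\setminus\ov\Omega$) playing the role of the original trace bound --- is precisely that analog. Your two extra observations, that $F_{\partial\Omega}(0)=0$ is an equality because $-\Delta_0^{\partial\Omega}$ is the free Laplacian, and that item (iv) needs a test function with nonvanishing trace, are the only points where the adaptation is not literal, and both are handled correctly.
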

\begin{cor}
Let $F_{\partial\Omega}$ be as in \eqref{fSigma}. Then the equation 
\begin{equation}
\label{eqf2}
F_{\partial\Omega}(\alpha) = -1
\end{equation}
on $\alpha$ has a unique strictly positive solution denoted by $\alpha(\partial\Omega)$.
\end{cor}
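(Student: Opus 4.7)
The plan is to mirror exactly the argument used for Corollary~\ref{cor:f}, now applied to $F_{\partial\Omega}$ in place of $F_\Omega$, using the preceding analogue of Proposition~\ref{prop:f}. First I would invoke items (iii) and (iv) of that proposition to record the boundary values: $F_{\partial\Omega}(0) = 0 > -1$ and $\lim_{\alpha\to+\infty} F_{\partial\Omega}(\alpha) = -\infty < -1$. Combined with continuity (item (ii)), the intermediate value theorem immediately produces some $\alpha_\star > 0$ with $F_{\partial\Omega}(\alpha_\star) = -1$. Note that strict positivity is automatic, since $F_{\partial\Omega}(0) = 0 \neq -1$ rules out $\alpha_\star = 0$.

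For uniqueness I would argue by contradiction. Suppose two solutions $0 < \alpha_1 < \alpha_2$ both satisfy \eqref{eqf2}. Since $F_{\partial\Omega}(\alpha_1) = -1 < 0$, the strict monotonicity clause of item (i) applies and yields $F_{\partial\Omega}(\alpha_2) < F_{\partial\Omega}(\alpha_1) = -1$, contradicting $F_{\partial\Omega}(\alpha_2) = -1$. Hence the solution is unique, and we may define $\alpha(\partial\Omega) := \alpha_\star$.

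The only genuine step is that the analogue proposition itself must be valid for $F_{\partial\Omega}$, and the author has stated it without proof. The proofs would transcribe line-by-line from Proposition~\ref{prop:f}, with two small adjustments: (a) the trace estimate of Lemma~\ref{lem:BEL} has to be applied on $H^1(\dR^d)$, which one gets by applying the lemma to either $\Omega$ or $\dR^d\setminus\ov\Omega$ and restricting; (b) item (iii) now gives the equality $F_{\partial\Omega}(0) = 0$ (rather than the inequality $\ge 0$) because $\frt^{\partial\Omega}_0$ is just the free Laplacian form on $\dR^d$, whose spectrum is $[0,\infty)$. This equality is what makes $\alpha(\partial\Omega)$ strictly positive. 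There is no substantive obstacle here; the entire argument is a routine application of monotonicity and the intermediate value theorem.
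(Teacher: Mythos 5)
Your argument is correct and is essentially the paper's own: the paper proves Corollary~\ref{cor:f} by exactly this combination of $F(0)\ge 0$, the limit $-\infty$, continuity (intermediate value theorem) for existence and strict positivity, and the strict-monotonicity clause of item (i) for uniqueness, and it explicitly states that the $\delta$-interaction analogue is obtained by transcribing that proof. Your additional remarks on how the analogue of Proposition~\ref{prop:f} would be verified (trace estimate on $H^1(\dR^d)$ and the equality $F_{\partial\Omega}(0)=0$ from the free Laplacian) are accurate and consistent with the paper's intent.
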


\section{Lower bounds on the norms of extension operators}
\label{sec:main}
In this section we estimate the value $\cE(\Omega)$ from below using the unique roots
of the equations $F_\Omega(\beta) = -1$, $F_{\dR^d\setminus\ov{\Omega}}(\beta) = -1$
and $F_{\partial\Omega}(\alpha) = -1$. In the first lower bound on $\cE(\Omega)$ the unique roots of the equations $F_\Omega(\beta) = -1$, $F_{\dR^d\setminus\ov{\Omega}}(\beta) = -1$ are employed.
\begin{thm}
\label{thm:bnd1}
Let $\Omega\subset \dR^d$ be a bounded or unbounded Lipschitz domain as in Definition~\ref{def:Lipschitz}. Let the values $\beta(\Omega) > 0$ and $\beta(\dR^d\setminus\ov\Omega) > 0$ be defined as the solutions of the equation \eqref{eqf} for the domains $\Omega$ and $\dR^d\setminus\ov\Omega$, respectively.
Let the value $\cE(\Omega)$ be defined as in \eqref{alpha1}.
Then the following estimate 
\[
\cE(\Omega) \ge \sqrt{1 + \frac{\beta(\dR^d\setminus\ov\Omega)}{\beta(\Omega)}}
\]
holds. In particular, $\max\{\cE(\Omega), \cE(\dR^d\setminus\ov\Omega)\} \ge \sqrt{2}$.
\end{thm}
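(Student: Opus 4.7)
Fix an arbitrary extension operator $E \colon H^1(\Omega) \to H^1(\dR^d)$ and an arbitrary $f \in H^1(\Omega)$. Set $g := (Ef)|_{\dR^d\setminus\ov\Omega} \in H^1(\dR^d\setminus\ov\Omega)$. Since $\partial\Omega$ has Lebesgue measure zero, the integrals defining the $H^1$-norm split, so
\[
\|Ef\|_1^2 = \|f\|_{H^1(\Omega)}^2 + \|g\|_{H^1(\dR^d\setminus\ov\Omega)}^2.
\]
Moreover, because $Ef \in H^1(\dR^d)$, the one-sided traces of $Ef$ on $\partial\Omega$ from $\Omega$ and from $\dR^d\setminus\ov\Omega$ coincide, so $g|_{\partial\Omega} = f|_{\partial\Omega}$ in $L^2(\partial\Omega)$.

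The next step is to extract two sharp trace-type inequalities from the defining equations $F_\Omega(\beta(\Omega)) = -1$ and $F_{\dR^d\setminus\ov\Omega}(\beta(\dR^d\setminus\ov\Omega)) = -1$. By the variational characterisation \eqref{fOmega}, the first one is equivalent to the statement that for every $h \in H^1(\Omega)$,
\[
\|\nabla h\|_{L^2(\Omega;\dC^d)}^2 - \beta(\Omega)\|h|_{\partial\Omega}\|_{L^2(\partial\Omega)}^2 \ge -\|h\|_{L^2(\Omega)}^2,
\]
equivalently $\|h|_{\partial\Omega}\|_{L^2(\partial\Omega)}^2 \le \beta(\Omega)^{-1}\|h\|_{H^1(\Omega)}^2$; moreover, a minimising sequence for $F_\Omega(\beta(\Omega))$ produces functions saturating this inequality, so
\[
\sup_{h \in H^1(\Omega),\,h\ne 0}\frac{\|h|_{\partial\Omega}\|_{L^2(\partial\Omega)}^2}{\|h\|_{H^1(\Omega)}^2}=\frac{1}{\beta(\Omega)}.
\]
Applying the same reasoning to $\dR^d\setminus\ov\Omega$ gives the reverse-type bound
\[
\|g\|_{H^1(\dR^d\setminus\ov\Omega)}^2 \ge \beta(\dR^d\setminus\ov\Omega)\,\|g|_{\partial\Omega}\|_{L^2(\partial\Omega)}^2.
\]

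Combining the decomposition of $\|Ef\|_1^2$ with the exterior bound and the identification $g|_{\partial\Omega} = f|_{\partial\Omega}$ yields
\[
\|Ef\|_1^2 \ge \|f\|_{H^1(\Omega)}^2 + \beta(\dR^d\setminus\ov\Omega)\,\|f|_{\partial\Omega}\|_{L^2(\partial\Omega)}^2.
\]
Dividing by $\|f\|_{H^1(\Omega)}^2$ and taking the supremum over $f\ne 0$, while using the sharpness of the interior trace inequality, gives
\[
\|E\|_1^2 \ge 1 + \frac{\beta(\dR^d\setminus\ov\Omega)}{\beta(\Omega)}.
\]
Since $E$ was arbitrary, taking the infimum proves the main inequality. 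The last assertion is immediate: applying the bound in the symmetric form to $\dR^d\setminus\ov\Omega$ in place of $\Omega$, with $r := \beta(\dR^d\setminus\ov\Omega)/\beta(\Omega) > 0$, yields
\[
\max\{\cE(\Omega)^2,\cE(\dR^d\setminus\ov\Omega)^2\}\ge \max\{1+r,\,1+r^{-1}\}\ge 2,
\]
because $r+r^{-1}\ge 2$ forces at least one of $r,r^{-1}$ to be $\ge 1$.

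The only delicate point is the decomposition of $\|Ef\|_1^2$ together with the matching of one-sided traces; both rely only on the fact that $Ef$ belongs to $H^1(\dR^d)$ and that $\partial\Omega$ is a Lipschitz hypersurface, so are standard. Everything else is a transparent rearrangement of the Rayleigh quotients defining $\beta(\Omega)$ and $\beta(\dR^d\setminus\ov\Omega)$.
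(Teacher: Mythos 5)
Your proof is correct and follows essentially the same route as the paper: the decomposition $Ef = f \oplus g$, the matching of one-sided traces, and the use of the two defining equations $F_\Omega(\beta(\Omega))=-1$ and $F_{\dR^d\setminus\ov\Omega}(\beta(\dR^d\setminus\ov\Omega))=-1$ as, respectively, a sharp interior trace inequality and an exterior lower bound are exactly the paper's argument. Your only (harmless) stylistic difference is packaging the near-minimizers $f_\varepsilon$ into the statement that $1/\beta(\Omega)$ is the optimal constant in $\|h|_{\partial\Omega}\|^2_{L^2(\partial\Omega)}\le C\|h\|^2_{H^1(\Omega)}$, which streamlines the $\varepsilon$-bookkeeping.
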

\begin{proof}
Suppose that $\beta(\Omega) > 0 $ is the unique solution of the equation $F_\Omega(\beta) = -1$. Then for any $\varepsilon >0$ there exists a function $f_\varepsilon \in  H^1(\Omega)$ such that 
\[
\frt^\Omega_{\beta(\Omega)}[f_\varepsilon,f_\varepsilon] = \|\nabla f_\varepsilon\|^2_{L^2(\Omega;\dC^d)} - \beta(\Omega)\|f_\varepsilon|_{\partial\Omega}\|_{L^2(\partial\Omega)}^2 \le (-1+\varepsilon)\|f_\varepsilon\|_{L^2(\Omega)}^2.
\]
This inequality can be rewritten in a more suitable form
\begin{equation}
\label{estimate1}
\|f_\varepsilon|_{\partial\Omega}\|_{L^2(\partial\Omega)}^2 \ge \frac{1}{\beta(\Omega)}\|\nabla  f_\varepsilon\|^2_{L^2(\Omega;\dC^d)} + \frac{1-\varepsilon}{\beta(\Omega)}\|f_\varepsilon\|_{L^2(\Omega)}^2.
\end{equation}
Suppose that $E$ is an arbitrary extension operator for the domain $\Omega$
as in Definition~\ref{dfn:extension}. Let us apply this operator to the function $f_\varepsilon$
and denote
\[
g_\varepsilon := (Ef_\varepsilon)|_{\dR^d\setminus\ov\Omega}\in H^1(\dR^d\setminus\ov\Omega).
\]
Recall that $\beta(\dR^d\setminus\ov\Omega) > 0$ is the unique solution of the equation
$F_{\dR^d\setminus\ov\Omega}(\beta) = -1$. Therefore, in view of \eqref{frt}, we get
\[
\|\nabla g_\varepsilon\|^2_{L^2(\dR^d\setminus\Omega;\dC^d)} - \beta(\dR^d\setminus\ov\Omega)
\|g_\varepsilon|_{\partial\Omega}\|^2_{L^2(\partial\Omega)} \ge -\|g_\varepsilon\|^2_{L^2(\dR^d\setminus\Omega)}.
\]
The latter can be rewritten as
\begin{equation}
\label{estimate2}
\|g_\varepsilon|_{\partial\Omega}\|^2_{L^2(\partial\Omega)}\le \frac{1}{\beta(\dR^d\setminus\ov\Omega)}\|\nabla g_\varepsilon\|^2_{L^2(\dR^d\setminus\ov\Omega;\dC^d)} + \frac{1}{\beta(\dR^d\setminus\ov\Omega)}\|g_\varepsilon\|^2_{L^2(\dR^d\setminus\ov\Omega)}.
\end{equation}
Note that by the definition of the operator $E$ and the properties of the Sobolev space $H^1(\dR^d)$ we can state that
\begin{equation}
\label{equality}
g_\varepsilon|_{\partial\Omega} = f_\varepsilon|_{\partial\Omega}.
\end{equation}
Now estimates \eqref{estimate1}, \eqref{estimate2} and observation \eqref{equality}
imply
\[
\begin{split}
&\frac{1}{\beta(\Omega)}\|\nabla  f_\varepsilon\|^2_{L^2(\Omega;\dC^d)} + \frac{1-\varepsilon}{\beta(\Omega)}\|f_\varepsilon\|_{L^2(\Omega)}^2\\
&\qquad\le \frac{1}{\beta(\dR^d\setminus\ov\Omega)}\|\nabla g_\varepsilon\|^2_{L^2(\dR^d\setminus\ov\Omega;\dC^d)} \!+\! \frac{1}{\beta(\dR^d\setminus\ov\Omega)}\|g_\varepsilon\|^2_{L^2(\dR^d\setminus\ov\Omega)},
\end{split}
\]
which leads to
\[
\begin{split}
&\|\nabla  f_\varepsilon\|^2_{L^2(\Omega;\dC^d)} + (1-\varepsilon)\|f_\varepsilon\|^2_{L^2(\Omega)} \le\\
&\qquad\qquad\le \frac{\beta(\Omega)}{\beta(\dR^d\setminus\ov\Omega)}\|\nabla g_\varepsilon\|^2_{L^2(\dR^d\setminus\ov\Omega;\dC^d)} + \frac{\beta(\Omega)}{\beta(\dR^d\setminus\ov\Omega)}\|g_\varepsilon\|^2_{L^2(\dR^d\setminus\ov\Omega)} 
\end{split}
\]
That yields
\[
(1-\varepsilon)\|f_\varepsilon\|^2_{1}\le \frac{\beta(\Omega)}{\beta(\dR^d\setminus\ov\Omega)}\|g_\varepsilon\|_{1}^2.
\]
Note that $Ef_\varepsilon = f_\varepsilon\oplus g_\varepsilon$, and hence  for all sufficiently small $\varepsilon >0$
\[
\|E\|^2_{1}\ge 1 + \big(1-\varepsilon\big)\frac{\beta(\dR^d\setminus\ov\Omega)}{\beta(\Omega)}.
\]
Passing to the limit $\varepsilon\rightarrow 0+$ we arrive at
\[
\|E\|_{1} \ge\sqrt{ 1+\frac{\beta(\dR^d\setminus\ov\Omega)}{\beta(\Omega)} }.
\]
That finishes the proof.
\end{proof}
\begin{remark}
\label{rem:sharp}
We should say a few words on the sharpness of the obtained lower bound. For the half-space 
\[
\dR^d_+ = \{(x_1,x_2,\dots,x_d)\in\dR^d\colon x_d > 0\}
\]
it is known that $F_{\dR^d_+}(\beta)  = -\beta^2$, that is $\beta(\dR^d_+) = 1$ and analogously $\beta(\dR^d_-) =1$. Hence, Theorem~\ref{thm:bnd1} yields
\[
\cE(\dR^d_+) \ge\sqrt{2}.
\]
On the other hand the reflection operator
\[
(Ef)(x) := \begin{cases} f(x_1,x_2,\dots,x_d),& \quad x_d > 0,\\
f(x_1,x_2,\dots,-x_d),&\quad x_d < 0,
\end{cases}
\]
is an extension operator in the sense of Definition~\ref{dfn:extension}
and 
\[
\|E\|_{1} = \sqrt{2}.
\]
So in fact $\cE(\dR^d_+) =\sqrt{2}$ and in this respect the bound in Theorem~\ref{thm:bnd1} is sharp.
\end{remark}
In the next theorem we obtain the lower bound on $\cE(\Omega)$ in terms of the roots of the equations $F_{\partial\Omega}(\alpha) = -1$ and $F_\Omega(\beta) = -1$.
\begin{thm}
\label{thm:bnd2}
Let $\Omega\subset \dR^d$ be a bounded or unbounded Lipschitz domain as in Definition~\ref{def:Lipschitz} with the boundary $\partial\Omega$. Let the value $\beta(\Omega) > 0$ be defined as the solution of the equation \eqref{eqf} for the domain $\Omega$, and let the value $\alpha(\partial\Omega)$ be defined as the solution of the equation \eqref{eqf2} for the hypersurface $\partial\Omega$.
Let the value $\cE(\Omega)$ be defined as in \eqref{alpha1}.
Then the following estimate 
\[
\cE(\Omega) \ge \sqrt{\frac{\alpha(\partial\Omega)}{\beta(\Omega)}}
\]
holds. 
\end{thm}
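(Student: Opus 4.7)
The plan is to mimic the structure of the proof of Theorem~\ref{thm:bnd1}, replacing the exterior Robin form by the global $\delta$-interaction form on $\mathbb{R}^d$ supported on $\partial\Omega$. Concretely, I start from the defining property of $\beta(\Omega)$ to produce an approximate minimizer $f_\varepsilon\in H^1(\Omega)$ with $\|f_\varepsilon\|_{L^2(\Omega)}=1$ satisfying
\[
\frt^{\Omega}_{\beta(\Omega)}[f_\varepsilon,f_\varepsilon]\le -1+\varepsilon,
\]
which rearranges to the trace lower bound
\[
\|f_\varepsilon|_{\partial\Omega}\|_{L^2(\partial\Omega)}^{2}\ge \frac{1}{\beta(\Omega)}\Bigl(\|\nabla f_\varepsilon\|_{L^2(\Omega;\dC^d)}^{2}+(1-\varepsilon)\|f_\varepsilon\|_{L^2(\Omega)}^{2}\Bigr).
\]

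Next I apply an arbitrary extension operator $E$ and set $F_\varepsilon := Ef_\varepsilon\in H^1(\dR^d)$. The key observation, replacing the exterior Robin inequality used in Theorem~\ref{thm:bnd1}, is that $\alpha(\partial\Omega)$ is defined by $\inf\sigma(-\Delta_{\alpha(\partial\Omega)}^{\partial\Omega})=-1$, so the variational characterization gives for \emph{every} $u\in H^1(\dR^d)$
\[
\alpha(\partial\Omega)\|u|_{\partial\Omega}\|_{L^2(\partial\Omega)}^{2}\le \|\nabla u\|_{L^2(\dR^d;\dC^d)}^{2}+\|u\|_{L^2(\dR^d)}^{2}=\|u\|_{1}^{2}.
\]
Applied to $u=F_\varepsilon$, and using that traces from $\Omega$ and from $\dR^d$ agree, $F_\varepsilon|_{\partial\Omega}=f_\varepsilon|_{\partial\Omega}$ (a property of $H^1(\dR^d)$ combined with $F_\varepsilon|_\Omega=f_\varepsilon$), this produces
\[
\alpha(\partial\Omega)\|f_\varepsilon|_{\partial\Omega}\|_{L^2(\partial\Omega)}^{2}\le \|Ef_\varepsilon\|_{1}^{2}\le \|E\|_{1}^{2}\|f_\varepsilon\|_{1}^{2}.
\]

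Combining the two inequalities yields
\[
\frac{\alpha(\partial\Omega)}{\beta(\Omega)}\Bigl(\|\nabla f_\varepsilon\|_{L^2(\Omega;\dC^d)}^{2}+(1-\varepsilon)\|f_\varepsilon\|_{L^2(\Omega)}^{2}\Bigr)\le \|E\|_{1}^{2}\,\|f_\varepsilon\|_{1}^{2}.
\]
Dividing by $\|f_\varepsilon\|_{1}^{2}=\|\nabla f_\varepsilon\|_{L^2(\Omega;\dC^d)}^{2}+\|f_\varepsilon\|_{L^2(\Omega)}^{2}$ and using the trivial bound $\|f_\varepsilon\|_{L^2(\Omega)}^{2}/\|f_\varepsilon\|_{1}^{2}\le 1$ gives
\[
\frac{\alpha(\partial\Omega)}{\beta(\Omega)}-\varepsilon\,\frac{\alpha(\partial\Omega)}{\beta(\Omega)}\le \|E\|_{1}^{2}.
\]
Letting $\varepsilon\to 0+$ and then taking the infimum over all extension operators $E$ delivers the claimed bound $\cE(\Omega)\ge \sqrt{\alpha(\partial\Omega)/\beta(\Omega)}$.

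There is no real obstacle here beyond the $\varepsilon$-bookkeeping; the only point that must be treated with care is the trace identification $(Ef_\varepsilon)|_{\partial\Omega}=f_\varepsilon|_{\partial\Omega}$, which is exactly what was invoked as \eqref{equality} in the proof of Theorem~\ref{thm:bnd1} and follows from $Ef_\varepsilon\in H^1(\dR^d)$ together with $Ef_\varepsilon|_\Omega=f_\varepsilon$ by continuity of the trace.
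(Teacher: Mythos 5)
Your proposal is correct and follows essentially the same route as the paper's own proof: an approximate minimizer for the Robin form on $\Omega$ giving a lower bound on the trace norm, the variational inequality for the $\delta$-interaction form applied to $Ef_\varepsilon$ giving an upper bound, and the trace identification $(Ef_\varepsilon)|_{\partial\Omega}=f_\varepsilon|_{\partial\Omega}$ linking the two. The only cosmetic difference is in the final $\varepsilon$-bookkeeping (you divide by $\|f_\varepsilon\|_1^2$ and use $\|f_\varepsilon\|_{L^2}^2\le\|f_\varepsilon\|_1^2$, while the paper absorbs the $\varepsilon$ directly), which yields the same bound.
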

\begin{proof}
Suppose that $\beta(\Omega) > 0 $ is the unique solution of the equation $F_\Omega(\beta) = -1$. Then for any $\varepsilon >0$ there exists a function $f_\varepsilon \in  H^1(\Omega)$ such that 
\[
\frt^\Omega_{\beta(\Omega)}[f_\varepsilon,f_\varepsilon] = \|\nabla f_\varepsilon\|^2_{L^2(\Omega;\dC^d)} - \beta(\Omega)\|f_\varepsilon|_{\partial\Omega}\|_{L^2(\partial\Omega)}^2 \le (-1+\varepsilon)\|f_\varepsilon\|_{L^2(\Omega)}^2.
\]
This inequality can be rewritten in a more suitable form
\begin{equation}
\label{estimate3}
\|f_\varepsilon|_{\partial\Omega}\|_{L^2(\partial\Omega)}^2 \ge \frac{1}{\beta(\Omega)}\|\nabla  f_\varepsilon\|^2_{L^2(\Omega;\dC^d)} + \frac{1-\varepsilon}{\beta(\Omega)}\|f_\varepsilon\|_{L^2(\Omega)}^2.
\end{equation}
Suppose that $E$ is an arbitrary extension operator for the domain $\Omega$
as in Definition~\ref{dfn:extension}. Let us apply this operator to the function $f_\varepsilon$ and define 
\[
h_\varepsilon := Ef_\varepsilon\in H^1(\dR^d).
\]
Clearly, the relation 
\begin{equation}
\label{obs2}
f_\varepsilon|_{\partial\Omega} = h_\varepsilon|_{\partial\Omega}
\end{equation}
holds. Since $\alpha(\partial\Omega) > 0$ is the root of $F_{\partial\Omega}(\alpha) = -1$
we arrive at the inequality
\[
\|\nabla h_\varepsilon\|^2_{L^2(\dR^d;\dC^d)} -\alpha(\partial\Omega)\|h_\varepsilon|_{\partial\Omega}\|^2_{L^2(\partial\Omega)} \ge -\|h_\varepsilon\|^2_{L^2(\dR^d)}, 
\]
which can be rewritten as
\begin{equation}
\label{estimate4}
\alpha(\partial\Omega)\|h_\varepsilon|_{\partial\Omega}\|^2_{L^2(\partial\Omega)} \le \|\nabla h_\varepsilon\|^2_{L^2(\dR^d;\dC^d)}+\|h_\varepsilon\|^2_{L^2(\dR^d)}, 
\end{equation}
Combining the estimates \eqref{estimate3}, \eqref{estimate4} and the observation \eqref{obs2} we get
\[
\begin{split}
& \frac{\alpha(\partial\Omega)}{\beta(\Omega)}\|\nabla  f_\varepsilon\|^2_{L^2(\dR^d;\dC^d)} + \frac{\alpha(\partial\Omega)}{\beta(\Omega)}(1-\varepsilon)\|f_\varepsilon\|^2_{L^2(\dR^d)} \le\\
&\qquad\qquad\le\|\nabla h_\varepsilon\|^2_{L^2(\dR^d;\dC^d)} +\|h_\varepsilon\|^2_{L^2(\dR^d)} .
\end{split}
\]
Hence, we obtain the estimate
\[
\frac{\alpha(\partial\Omega)}{\beta(\Omega)}(1-\varepsilon)\|f_\varepsilon\|_{1}^2\le \|h_\varepsilon\|_{1}^2,
\]
which implies for all sufficiently small $\varepsilon >0$
\[
\|E\|^2_{1} \ge\frac{\alpha(\partial\Omega)}{\beta(\Omega)}(1-\varepsilon).
\]
That leads to 
\[
\|E\|_{1} \ge \sqrt{\frac{\alpha(\partial\Omega)}{\beta(\Omega)}}
\]
in the limit $\varepsilon \rightarrow 0+$ and the claim is proved.
\end{proof}
\begin{remark}
Similar argumentation as in Remark~\ref{rem:sharp} and the fact that $F_{\partial\dR^d_+}(\alpha) = -\alpha^2/4$ give sharpness of the estimate in Theorem~\ref{thm:bnd2} for half-spaces.
\end{remark}
\section{Examples}
\label{sec:examples}
We supplement our general estimates with two examples. One example with domains of infinite measure and one example with domains of finite measure.
\subsection{Extension operators on wedges}
\label{ssec:example1}
In this subsection our aim is to illustrate obtained general estimates in the case of wedges, which are domains of infinite measure.
The wedge with angle $\varphi\in(0,\pi)$ can be defined as a hypograph
\begin{equation}
\label{wedge}
 \Omega_\varphi := \big\{(x_1,x_2)\in\dR^2\colon x_2 > \cot(\varphi/2) |x_1|\big\}.
\end{equation}
That is a special Lipschitz domain as in Definition~\ref{dfn:special}, whose
boundary is defined by the function
\[
\xi(x_1)  := \cot(\varphi/2)|x_1|,
\]
see Figure~\ref{fig}.
\begin{figure}[H]
\begin{center}
\begin{picture}(200,120)
\put(100,10){\line(-1,2){50}}
\put(100,10){\line(1,2){50}}
\put(85,90){$\Omega_\varphi$}
\put(54,110){\begin{turn}{-66.6}$\Sigma_2$\end{turn}}
\put(135,105){\begin{turn}{66.6}$\Sigma_1$\end{turn}}
\qbezier(98,14)(100,16)(102,14)
\put(94,21){\small{$ \varphi$}}
\put(100,10){\vector(0,1){110}}
\put(100,10){\vector(1,0){90}}
\put(105,114){$x_2$}
\put(185,15){$x_1$}
%\put(211,12){\small{$ \varphi$}}
\end{picture}
\end{center}
\caption{A wedge $\Omega\subset\dR^2$ with angle $\varphi\in(0,\pi)$ having sides $\Sigma_1$ and $\Sigma_2$.}
\label{fig}
\end{figure}
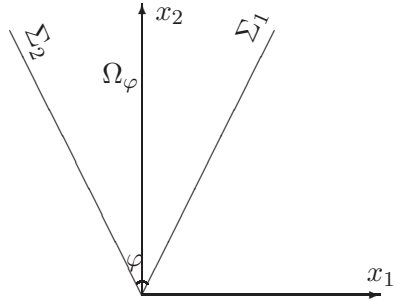
Using Theorem~\ref{thm:bnd1} and spectral results of \cite{LP08} we obtain a lower  bound on $\cE(\Omega_\varphi)$
and by means of reflection operator we obtain an upper bound on $\cE(\Omega_\varphi)$. Having a two-sided estimate of 
$\cE(\Omega_\varphi)$ we compute its asymptotic behaviour in the limit $\varphi\rightarrow \pi-$.
We make use of the following result contained in \cite{LP08}.
\begin{lem}\cite[Lemma 2.6, Lemma 2.8]{LP08}
\label{LP}
Let $\Omega_\varphi\subset\dR^2$ be a wedge with $\varphi\in(0,\pi)$.
Let the functions $F_{\Omega_\varphi}$ and $F_{\dR^2\setminus\ov{\Omega_\varphi}}$ be defined as in \eqref{fOmega}.
Then the following statements hold:
\begin{itemize}\setlength{\parskip}{1.2mm}
\item [\rm (i)] $F_{\Omega_\varphi}(\beta) = -\frac{\beta^2}{\sin^2(\varphi/2)}$;
\item [\rm (ii)] $F_{\dR^2\setminus\ov{\Omega_\varphi}}(\beta) = -\beta^2$.
\end{itemize}
\end{lem}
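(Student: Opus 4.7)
The plan is to prove both parts by combining two ingredients. An explicit trial function or a Weyl sequence provides the upper bound for $F$, and a completing-the-square identity with a well-chosen divergence-free vector field provides the matching lower bound. A preliminary scaling $y = \beta x$ reduces the problem to $\beta = 1$: since both $\Omega_\varphi$ and its complement are cones invariant under dilation, the Rayleigh quotient in \eqref{fOmega} transforms by an overall factor of $\beta^2$, so $F_{\Omega_\varphi}(\beta) = \beta^2 F_{\Omega_\varphi}(1)$ and analogously for the complement. It therefore suffices to verify $F_{\Omega_\varphi}(1) = -\kappa^2$ with $\kappa := 1/\sin(\varphi/2)$ for (i), and $F_{\dR^2\setminus\ov{\Omega_\varphi}}(1) = -1$ for (ii).

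For (i), orient coordinates so that the bisector of $\Omega_\varphi$ is the positive $x_2$-axis; then the outward unit normal on both rays of $\partial\Omega_\varphi$ has $x_2$-component $-\sin(\varphi/2)$. The upper bound $F_{\Omega_\varphi}(1)\le -\kappa^2$ is obtained by inserting the trial function $u(x_1,x_2) = e^{-\kappa x_2}$ into \eqref{fOmega}; the three integrals are elementary (exponential decay is inherited along the boundary rays) and the Rayleigh quotient of $u$ equals $-\kappa^2$ after simplification. For the lower bound, set $V := (0,\kappa)$; this constant vector field is divergence-free, satisfies $|V|^2 = \kappa^2$, and has $V\cdot\nu = -1$ on both boundary rays. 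Integrating the pointwise identity
\[
|\nabla u + V u|^2 = |\nabla u|^2 + \kappa^2 |u|^2 + V\cdot\nabla|u|^2
\]
over $\Omega_\varphi$ and using the divergence theorem to rewrite $\int_{\Omega_\varphi} V\cdot\nabla|u|^2 = -\int_{\partial\Omega_\varphi}|u|^2$ gives
\[
\int_{\Omega_\varphi}|\nabla u + V u|^2 = \frt_1^{\Omega_\varphi}[u,u] + \kappa^2\|u\|^2_{L^2(\Omega_\varphi)},
\]
and non-negativity of the left-hand side delivers $F_{\Omega_\varphi}(1)\ge -\kappa^2$.

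For (ii) set $\Omega' := \dR^2\setminus\ov{\Omega_\varphi}$. The upper bound $F_{\Omega'}(1)\le -1$ comes from a Weyl sequence modelled on the half-plane Robin ground state: choose local coordinates $(x,y)$ in which one ray of $\partial\Omega'$ becomes $\{y = 0\}$ and the adjacent portion of $\Omega'$ becomes $\{y > 0\}$, and take $u_R(x,y) := e^{-y}\chi_R(x)$ with $\chi_R(x) = \chi(x/R)$ a smooth cutoff of fixed shape supported far from the vertex; the error generated by $\chi_R'$ is $O(R^{-2})$, so the Rayleigh quotient of $u_R$ tends to $-1$ as $R\to\infty$. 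For the lower bound, the non-convexity of $\Omega'$ prevents a single constant vector field from satisfying $V\cdot\nu = -1$ on both rays simultaneously, so I split $\Omega' = \Omega'_R \cup \Omega'_L$ along the vertical seam $\{x_1 = 0\}\cap\Omega'$ and put
\[
V_R := (\cos(\varphi/2),-\sin(\varphi/2))\ \text{on }\Omega'_R,\qquad V_L := (-\cos(\varphi/2),-\sin(\varphi/2))\ \text{on }\Omega'_L,
\]
each perpendicular to the adjacent ray and pointing into $\Omega'$, so that $V_R\cdot\nu = V_L\cdot\nu = -1$ on the corresponding boundary ray. Applying the completing-the-square identity on each half-domain and summing, the two boundary contributions on the seam combine with equal sign and produce an extra positive term, giving
\[
\int_{\Omega'_R}|\nabla u + V_R u|^2 + \int_{\Omega'_L}|\nabla u + V_L u|^2 + 2\cos(\varphi/2)\int_{\{x_1=0\}\cap\Omega'}|u|^2\,dx_2 = \frt_1^{\Omega'}[u,u] + \|u\|^2_{L^2(\Omega')}.
\]
Since $\cos(\varphi/2) > 0$ for $\varphi\in(0,\pi)$ the left-hand side is non-negative, proving $F_{\Omega'}(1)\ge -1$.

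The main obstacle is the lower bound in (ii): because $\Omega'$ is non-convex, the clean single vector-field argument of (i) is unavailable, and one must introduce an interior seam. That the seam contribution emerges with a favourable sign is not automatic; it rests on the reflection symmetry of $\Omega_\varphi$ about $\{x_1 = 0\}$, which makes the jump of $V$ across the seam purely horizontal and forces the two one-sided boundary integrals to add rather than cancel. The passage from smooth test functions to arbitrary $u\in H^1$ in both parts is standard via the density of $C^\infty(\ov{\Omega})\cap H^1(\Omega)$ guaranteed by Definition~\ref{def:Lipschitz}.
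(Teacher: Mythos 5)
Your proof is correct, and it is worth noting that the paper itself offers no argument for this lemma at all: it is imported verbatim from Levitin--Parnovski \cite{LP08}, so what you have produced is a self-contained elementary proof of a cited result rather than a variant of one in the text. All the key steps check out. The dilation argument is valid precisely because $\Omega_\varphi$ and its complement are cones, and one verifies that under $f\mapsto f(\beta\,\cdot)$ the form scales as $\beta^{2-d}\frt_1[f]$ against $\beta^{-d}\|f\|^2$, giving $F(\beta)=\beta^2F(1)$. For (i), the trial function $e^{-\kappa x_2}$ gives $\|\nabla u\|^2=\kappa^2\|u\|^2$, $\|u\|^2=\tan(\varphi/2)/(2\kappa^2)$ and $\|u|_{\partial\Omega_\varphi}\|^2=1/(\kappa\cos(\varphi/2))=2\kappa^2\|u\|^2$, so the Rayleigh quotient is exactly $-\kappa^2$; this is not merely an upper bound but an exact minimizer, which is consistent with the lower bound you obtain from $V=(0,\kappa)$ (indeed $\nabla u+Vu\equiv 0$ for that $u$). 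For (ii), the seam computation is right: on the cut $\{x_1=0,\,x_2<0\}$ each half contributes $V_{R/L}\cdot\nu=-\cos(\varphi/2)$ with the respective outward normals $(\mp 1,0)$, so after moving these terms across the identity they appear as $+2\cos(\varphi/2)\int_S|u|^2\ge 0$, and the reflection symmetry is exactly what makes the two contributions reinforce rather than cancel. Two small points you should make explicit if you write this up: the Gauss--Green step on the unbounded (sub)wedges needs the usual truncation/density justification (note $|u|^2\in W^{1,1}$ with $L^1$ boundary trace by the trace estimate of Lemma~\ref{lem:BEL}), and in the upper bound of (ii) the half-strip $[R,2R]\times[0,\infty)$ in the rotated coordinates must be checked to lie in $\dR^2\setminus\ov{\Omega_\varphi}$ --- which it does, since the open half-plane on the far side of the line containing $\Sigma_1$ is disjoint from the convex wedge. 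Compared with simply citing \cite{LP08}, your argument buys a transparent, purely variational derivation whose lower-bound mechanism (a divergence-free ``ground-state'' vector field, with a seam for the non-convex case) is the same device the author later gestures at for the $\delta$-interaction bound of Lemma~\ref{L}.
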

The lower bound on $\cE(\Omega_\varphi)$ follows from Theorem~\ref{thm:bnd1} and Lemma~\ref{LP}.
\begin{prop}
\label{Ebnd}
Let $\Omega_\varphi\subset\dR^2$ be a wedge with $\varphi\in(0,\pi)$.
Let $\cE(\Omega_\varphi)$ be defined as in \eqref{alpha1}.
Then the following estimate
\[ 
\cE(\Omega_\varphi)\ge\sqrt{1+\frac{1}{\sin(\varphi/2)}}
\]
holds. In particular, $\lim_{\varphi\rightarrow 0+} \cE(\Omega_\varphi) = +\infty$
\end{prop}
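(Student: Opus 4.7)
The plan is to apply Theorem~\ref{thm:bnd1} directly to $\Omega_\varphi$, using the explicit formulas for $F_{\Omega_\varphi}$ and $F_{\dR^2\setminus\ov{\Omega_\varphi}}$ supplied by Lemma~\ref{LP}. Before doing so, I would briefly check that Theorem~\ref{thm:bnd1} applies: by \eqref{wedge}, $\Omega_\varphi$ is a special Lipschitz domain in the sense of Definition~\ref{dfn:special} (the boundary is the graph of the uniformly Lipschitz function $\xi(x_1)=\cot(\varphi/2)|x_1|$), and any special Lipschitz domain is trivially Lipschitz in the sense of Definition~\ref{def:Lipschitz} via the single-cover $\{U_1\} = \{\dR^2\}$.

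The main step is to identify the two positive roots. Using Lemma~\ref{LP}(i) and solving $-\beta^2/\sin^2(\varphi/2) = -1$, I obtain $\beta(\Omega_\varphi) = \sin(\varphi/2)$. Using Lemma~\ref{LP}(ii) and solving $-\beta^2 = -1$, I obtain $\beta(\dR^2\setminus\ov{\Omega_\varphi}) = 1$. Uniqueness and strict positivity of both roots are guaranteed by Corollary~\ref{cor:f}, so these values are unambiguous. Plugging them into the conclusion of Theorem~\ref{thm:bnd1} yields
\[
\cE(\Omega_\varphi) \ge \sqrt{1 + \frac{\beta(\dR^2\setminus\ov{\Omega_\varphi})}{\beta(\Omega_\varphi)}} = \sqrt{1 + \frac{1}{\sin(\varphi/2)}},
\]
which is the desired inequality. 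The limit $\cE(\Omega_\varphi) \to +\infty$ as $\varphi \to 0+$ then follows immediately, since $\sin(\varphi/2) \to 0+$ forces the right-hand side to diverge.

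Since every ingredient (the general lower bound, the closed-form expressions for $F_{\Omega_\varphi}$ and $F_{\dR^2\setminus\ov{\Omega_\varphi}}$, and the uniqueness of the positive root) is already in place, I do not foresee a genuine obstacle; the argument reduces to a short substitution. The only point that deserves a careful remark is that the formula in Lemma~\ref{LP}(i) holds on the whole range $\varphi\in(0,\pi)$, so that the root $\beta(\Omega_\varphi) = \sin(\varphi/2)$ is valid throughout and the estimate degenerates correctly in the sharp-cusp limit.
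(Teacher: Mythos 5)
Your proposal is correct and follows exactly the paper's own argument: identify $\beta(\Omega_\varphi)=\sin(\varphi/2)$ and $\beta(\dR^2\setminus\ov{\Omega_\varphi})=1$ from Lemma~\ref{LP}, then substitute into Theorem~\ref{thm:bnd1}. The extra verification that $\Omega_\varphi$ is a Lipschitz domain in the sense of Definition~\ref{def:Lipschitz} is a harmless addition the paper leaves implicit.
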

\begin{proof}
By Lemma~\ref{LP} the equation $F_{\Omega_\varphi}(\beta) = -1$
has the unique solution
\begin{equation}
\label{root1}
\beta(\Omega_\varphi) = \sin(\varphi/2),
\end{equation}
and the equation  $F_{\dR^2\setminus\ov{\Omega_\varphi}}(\beta) = -1$
has the unique solution
\begin{equation}
\label{root2}
\beta(\dR^2\setminus\ov{\Omega_\varphi}) = 1.
\end{equation}
Combining \eqref{root1}, \eqref{root2} and Theorem~\ref{thm:bnd1} we get the desired bound on $\cE(\Omega_\varphi)$.
\end{proof}
In the next lemma we give a spectral result from \cite{L13}.
\begin{lem}
\label{L}
Let $\Omega_\varphi\subset\dR^2$ be a wedge with $\varphi\in(0,\pi)$ with the boundary $\partial\Omega_\varphi$. Let the function $F_{\partial\Omega_\varphi}$ be defined as in \eqref{fSigma}. Then the following estimate
\[
F_{\partial\Omega_\varphi}(\alpha) \ge -\frac{\alpha^2}{(1+\sin(\varphi/2))^2}
\]
holds.
\end{lem}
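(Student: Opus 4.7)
The plan is to split the $\delta$-coupling $\alpha$ into two parts, one assigned to each side of $\partial\Omega_\varphi$, and then apply the Robin Laplacian lower bounds from Lemma~\ref{LP} on the two halves $\Omega_\varphi$ and $\dR^2\setminus\ov{\Omega_\varphi}$ separately. The single piece of nontrivial input beyond Lemma~\ref{LP} is the observation that if $f\in H^1(\dR^2)$ then its restrictions to $\Omega_\varphi$ and to $\dR^2\setminus\ov{\Omega_\varphi}$ lie in $H^1$ of each piece and share the same trace on $\partial\Omega_\varphi$.

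First, for an arbitrary $f\in H^1(\dR^2)$ I would write
\[
\frt_\alpha^{\partial\Omega_\varphi}[f,f]
= \|\nabla f\|^2_{L^2(\Omega_\varphi;\dC^2)} + \|\nabla f\|^2_{L^2(\dR^2\setminus\ov{\Omega_\varphi};\dC^2)} - \alpha\,\|f|_{\partial\Omega_\varphi}\|^2_{L^2(\partial\Omega_\varphi)},
\]
introduce a parameter splitting $\alpha = \alpha_1 + \alpha_2$ with $\alpha_1,\alpha_2 >0$ to be optimized later, and regroup this as
\[
\frt_\alpha^{\partial\Omega_\varphi}[f,f]
= \frt_{\alpha_1}^{\Omega_\varphi}\!\bigl[f|_{\Omega_\varphi},f|_{\Omega_\varphi}\bigr]
+ \frt_{\alpha_2}^{\dR^2\setminus\ov{\Omega_\varphi}}\!\bigl[f|_{\dR^2\setminus\ov{\Omega_\varphi}},f|_{\dR^2\setminus\ov{\Omega_\varphi}}\bigr].
\]

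Next I would apply the variational characterization of $F_{\Omega_\varphi}$ and $F_{\dR^2\setminus\ov{\Omega_\varphi}}$ together with Lemma~\ref{LP}\,(i),(ii) to each summand, obtaining
\[
\frt_{\alpha_1}^{\Omega_\varphi}[f|_{\Omega_\varphi},f|_{\Omega_\varphi}] \ge -\frac{\alpha_1^2}{\sin^2(\varphi/2)}\|f\|^2_{L^2(\Omega_\varphi)},\qquad
\frt_{\alpha_2}^{\dR^2\setminus\ov{\Omega_\varphi}}[\cdot,\cdot] \ge -\alpha_2^2\|f\|^2_{L^2(\dR^2\setminus\ov{\Omega_\varphi})}.
\]
Adding these gives a lower bound of $-\max\{\alpha_1^2/\sin^2(\varphi/2),\,\alpha_2^2\}\,\|f\|^2_{L^2(\dR^2)}$, so I would then choose $\alpha_1,\alpha_2$ to equalize the two coefficients, i.e.\ $\alpha_1/\sin(\varphi/2)=\alpha_2$; together with $\alpha_1+\alpha_2=\alpha$ this forces $\alpha_2 = \alpha/(1+\sin(\varphi/2))$, and the combined lower bound becomes exactly $-\alpha^2/(1+\sin(\varphi/2))^2\,\|f\|^2_{L^2(\dR^2)}$, which is the claimed inequality after taking infimum over unit-normalized $f$.

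There is no genuine obstacle — the argument is a two-line optimization once the splitting is set up. The only point requiring care is the identity relating $\frt_\alpha^{\partial\Omega_\varphi}$ to a sum of two Robin forms: it rests on the fact that the trace of $f$ on $\partial\Omega_\varphi$ viewed from inside $\Omega_\varphi$ and from outside coincide, which is a standard property of $H^1(\dR^2)$ for a Lipschitz hypersurface. With this in hand, the split is exact (no cross terms, no inequality lost), and the optimization of $\alpha_1,\alpha_2$ is a one-variable minimization that produces the sharp-looking constant $(1+\sin(\varphi/2))^{-2}$ appearing in the statement.
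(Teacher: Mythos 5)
Your proof is correct and is essentially the same argument as the one behind this lemma (the paper cites \cite{L13}, but the draft proof included in the source proceeds exactly by splitting $\alpha=\beta+(\alpha-\beta)$, applying the Levitin--Parnovski bounds of Lemma~\ref{LP} to the Robin forms on $\Omega_\varphi$ and $\dR^2\setminus\ov{\Omega_\varphi}$, and equalizing the two coefficients to get $\beta=\alpha\sin(\varphi/2)/(1+\sin(\varphi/2))$). Your optimization and the resulting constant $-\alpha^2/(1+\sin(\varphi/2))^2$ agree with the paper's.
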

Using Theorem~\ref{thm:bnd2}, Lemma~\ref{LP} and Lemma~\ref{L}
one gets the same lower bound on $\cE(\Omega_\varphi)$ as in Corollary~\ref{Ebnd}.
There is some hope (but we have no proof yet) that
\[
F_{\partial\Omega_\varphi}(\alpha) > -\frac{\alpha^2}{(1+\sin(\varphi/2))^2}.
\]
Then using Theorem~\ref{thm:bnd2} one gets better lower bound on $\cE(\Omega_\varphi)$ than in Proposition~\ref{Ebnd}.

In order to estimate $\cE(\Omega_\varphi)$ from above it suffices to take an arbitrary extension
operator as in Definition~\ref{dfn:extension} for the domain $\Omega_\varphi$ and estimate its
norm from above. We shall take the simplest one, which reflects the function
with respect to the boundary. The reflection with respect to the boundary of $\Omega_\varphi$ is defined as
\begin{equation}
\label{E}
\begin{split}
&E\colon H^1(\Omega_\varphi)\rightarrow H^1(\dR^2),\\
&\big(Ef\big)(x_1,x_2) := \begin{cases} f(x_1,x_2),& x_2 > \xi(x_1),\\
                  f(x_1,2\xi(x_1) - x_2),& x_2 \le  \xi(x_1).
                 \end{cases}
\end{split}
\end{equation}
In the next proposition we estimate the norm of $E$ from \eqref{E} from above.
In the special case of the wedge this estimation
is slightly finer than one can find in \cite[Appendix A]{McLean}.
\begin{prop}
\label{Ebnd2}
Let $\Omega_\varphi$ be a wedge as in \eqref{wedge}, and let the value $\cE(\Omega_\varphi)$ be defined as in \eqref{alpha1}.
Then the following estimate
\[
 \cE(\Omega_\varphi)\le \frac{\sqrt{2}}{\sin(\varphi/2)}\sqrt{1+\cos(\varphi/2)}
\]
holds.
\end{prop}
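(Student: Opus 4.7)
The plan is to verify that the reflection operator $E$ from \eqref{E} is indeed an extension operator and then directly estimate its norm. That $Ef$ lies in $H^1(\dR^2)$ whenever $f\in H^1(\Omega_\varphi)$ is standard (the boundary function $\xi(x_1)=\cot(\varphi/2)|x_1|$ is globally Lipschitz, so the change of variable $T(x_1,x_2):=(x_1,2\xi(x_1)-x_2)$ is bi-Lipschitz and pulls back $H^1$ functions), and the matching condition $(Ef)|_{\Omega_\varphi}=f$ is built in. The identity $E f\restr{\{x_2=\xi(x_1)\}}$ from both sides ensures no jump, so $Ef\in H^1(\dR^2)$.

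The computation of $\|E\|_1$ splits into the $L^2$ piece and the gradient piece over $\dR^2\setminus\ov{\Omega_\varphi}$, handled by the change of variables $y=T(x)$, which has Jacobian determinant $-1$ in absolute value equal to $1$. For the $L^2$ part this immediately gives $\|Ef\|_{L^2(\dR^2\setminus\ov{\Omega_\varphi})}^2=\|f\|_{L^2(\Omega_\varphi)}^2$, hence $\|Ef\|_{L^2(\dR^2)}^2=2\|f\|_{L^2(\Omega_\varphi)}^2$. For the gradient part, off the ray $\{x_1=0\}$ the function $\xi$ is smooth, so the chain rule yields
\[
\nabla(Ef)(x)=M(x_1)(\nabla f)(T(x)),\qquad M(x_1):=\begin{pmatrix}1 & 2\xi'(x_1)\\ 0 & -1\end{pmatrix},
\]
and after the change of variables
\[
\|\nabla Ef\|^2_{L^2(\dR^2\setminus\ov{\Omega_\varphi};\dC^2)}=\int_{\Omega_\varphi} v^\top M^\top M \, v\, dy,\qquad v=\nabla f(y),
\]
which is bounded by $\lambda_{\max}(M^\top M)\|\nabla f\|^2_{L^2(\Omega_\varphi;\dC^2)}$.

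The key calculation is then the eigenvalue estimate for the symmetric matrix
\[
M^\top M=\begin{pmatrix}1 & 2\xi' \\ 2\xi' & 4(\xi')^2+1\end{pmatrix},
\]
whose trace is $2+4(\xi')^2$ and whose determinant is $1$. Writing $c:=|\xi'|=\cot(\varphi/2)$ and using $\sqrt{1+c^2}=1/\sin(\varphi/2)$, the larger root of the characteristic polynomial is
\[
\lambda_{\max}=1+2c^2+2c\sqrt{1+c^2}
=\frac{\sin^2(\varphi/2)+2\cos^2(\varphi/2)+2\cos(\varphi/2)}{\sin^2(\varphi/2)}=\frac{(1+\cos(\varphi/2))^2}{\sin^2(\varphi/2)}.
\]

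Combining the two contributions,
\[
\|Ef\|_1^2\le \Bigl(1+\tfrac{(1+\cos(\varphi/2))^2}{\sin^2(\varphi/2)}\Bigr)\|\nabla f\|^2_{L^2(\Omega_\varphi;\dC^2)}+2\|f\|^2_{L^2(\Omega_\varphi)},
\]
and a short identity using $\sin^2(\varphi/2)=(1-\cos(\varphi/2))(1+\cos(\varphi/2))$ simplifies $1+\lambda_{\max}$ to $2(1+\cos(\varphi/2))/\sin^2(\varphi/2)$, which is $\ge 2$ for $\varphi\in(0,\pi)$. Hence this single factor controls both terms and
\[
\|E\|_1\le\sqrt{\frac{2(1+\cos(\varphi/2))}{\sin^2(\varphi/2)}}=\frac{\sqrt{2}}{\sin(\varphi/2)}\sqrt{1+\cos(\varphi/2)}.
\]
The main potential obstacle is merely the eigenvalue bookkeeping; the non-smoothness of $\xi$ at $x_1=0$ is harmless because it is a set of measure zero and the two smooth halves glue together continuously in $H^1$.
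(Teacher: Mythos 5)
Your proof is correct and follows essentially the same route as the paper: you estimate the norm of the same reflection operator \eqref{E} after the bi-Lipschitz change of variables $(x_1,x_2)\mapsto(x_1,2\xi(x_1)-x_2)$. The only difference is cosmetic -- you bound the gradient term by the largest eigenvalue of $M^\top M$ in closed form, whereas the paper uses a Young-type inequality with a free parameter $t$ and then optimizes over $t$; both computations yield the identical constant $\tfrac{2(1+\cos(\varphi/2))}{\sin^2(\varphi/2)}$, and your verification that this factor also dominates the $L^2$ coefficient $2$ is exactly the step the paper handles via the three-term maximum.
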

\begin{proof}
In the proof we use that the module of the Jacobian of the substitution
\[
(x_1,x_2) \mapsto (x_1, 2\xi(x_1) - x_2)
\]
is equal to one. Observe that the equality
\begin{equation}
\label{norm}
 \|Ef\|_{L^2(\dR^2)}^2=2\|f\|_{L^2(\Omega_\varphi)}^2
\end{equation}
holds, where we used that
\[
\|f\|^2_{L^2(\Omega_\varphi)} = \int_{\Omega_\varphi} |(Ef)(x_1,x_2)|^2dx_1dx_2 =\int_{\dR^2\setminus\ov{\Omega_\varphi}} 
|(Ef)(x_1,x_2)|^2dx_1dx_2.
\]
As the next step we compute directly partial derivatives of the function $Ef$:
\begin{equation*}
\label{partial1}
\begin{split}
(\partial_1 Ef)(x_1,x_2) & =  (\partial_1 f)(x_1,x_2),\quad x_2> \xi(x_1),\\
(\partial_1 Ef)(x_1,x_2) &=\big((\partial_1  +\sign(x_1)2\cot(\varphi/2)\partial_2)f\big)(x_1, 2\xi(x_1)-x_2),\quad\!\!\! x_2 \le\xi(x_1),
\end{split}
\end{equation*}
and
\begin{equation*}
\label{partial2}
(\partial_2 Ef)(x_1,x_2) = 
\begin{cases} 
(\partial_1 f)(x_1,x_2),&x_2 > \xi(x_1),\\
 (-\partial_2f)(x_1, 2\xi(x_1)-x_2), &x_2 \le \xi(x_1).
\end{cases}
\end{equation*}
Further, we estimate the norms of $\partial_1 Ef$ and  $\partial_2 Ef$
\begin{equation}
\label{partial1n}
 \begin{split}
 \|\partial_1 Ef\|_{L^2(\dR^2)}^2 &= \|\partial_1 f\|^2_{L^2(\Omega_\varphi)} + \|\partial_1f + \sign(x_1)2\cot(\varphi/2)\partial_2 f\|^2_{L^2(\Omega_\varphi)}\\
&\le (2+t)\|\partial_1 f\|^2_{L^2(\Omega_\varphi)} + 4\cot^2(\varphi/2)(1+1/t)\|\partial_2 f\|^2_{L^2(\Omega_\varphi)}
\end{split}
\end{equation}
with any $t > 0$ and
\begin{equation}
\label{partial2n}
\|\partial_2 Ef\|_{L^2(\dR^2)}^2 = 2\|\partial_2 f\|_{L^2(\Omega_\varphi)}^2.
\end{equation}
The estimates \eqref{norm}, \eqref{partial1n} and \eqref{partial2n} imply
\[
\begin{split}
\|Ef\|^2_{H^1(\dR^2)} &\le (2+t)\|\partial_1 f\|^2_{L^2(\Omega_\varphi)} \\
&\qquad +(2 + 4\cot^2(\varphi/2)(1+1/t))\|\partial_2 f\|^2_{L^2(\Omega_\varphi)} + 2\|f\|_{L^2(\Omega_\varphi)}^2.
\end{split}
\]
Hence,
\[
\|Ef\|^2_1 \le \max\{2+t,2+4\cot^2(\varphi/2)(1+1/t),2\}\|f\|_{1}^2.
\]
Optimizing the maximum between three values with respect to the parameter $t > 0$ we obtain that the maximum is minimal for 
\[
 t = 2\cot^2(\varphi/2) + \sqrt{4\cot^4(\varphi/2) +4\cot^2(\varphi/2)} = 2\cot^2(\varphi/2) + \frac{2\cot(\varphi/2)}{\sin(\varphi/2)}.
\]
That gives us the estimate
\[
\|E\|_1 \le \sqrt{2+t} = \frac{\sqrt{2}}{\sin(\varphi/2)}\sqrt{1+\cos(\varphi/2)}
\]
and, hence, by \eqref{alpha1}
\[
 \cE(\Omega_\varphi)\le \frac{\sqrt{2}}{\sin(\varphi/2)}\sqrt{1+\cos(\varphi/2)}.
\]
\end{proof}
\begin{cor}
\label{cor:asymp}
In the assumptions of the proposition above
\[
\sqrt{2}+\frac{\sqrt{2}}{32}\theta^2 + o(\theta^2)\le\cE(\Omega_{\pi - \theta}) \le \sqrt{2} + \frac{\sqrt{2}}{4}\theta + o(\theta), \qquad \theta\rightarrow 0+.
\]
\end{cor}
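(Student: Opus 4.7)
The plan is straightforward: the corollary is a direct Taylor-expansion consequence of the two-sided bound
\[
\sqrt{1+\tfrac{1}{\sin(\varphi/2)}}\;\le\;\cE(\Omega_\varphi)\;\le\;\tfrac{\sqrt{2}}{\sin(\varphi/2)}\sqrt{1+\cos(\varphi/2)}
\]
supplied by Proposition~\ref{Ebnd} and Proposition~\ref{Ebnd2}, specialized to $\varphi = \pi-\theta$. So the first step is to substitute and use the identities $\sin((\pi-\theta)/2) = \cos(\theta/2)$ and $\cos((\pi-\theta)/2) = \sin(\theta/2)$, reducing the two-sided estimate to
\[
\sqrt{1+\sec(\theta/2)}\;\le\;\cE(\Omega_{\pi-\theta})\;\le\;\tfrac{\sqrt{2}}{\cos(\theta/2)}\sqrt{1+\sin(\theta/2)}.
\]

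For the lower bound, I would expand $\cos(\theta/2) = 1 - \theta^2/8 + O(\theta^4)$, so that $\sec(\theta/2) = 1 + \theta^2/8 + O(\theta^4)$ and
\[
1 + \sec(\theta/2) = 2 + \tfrac{\theta^2}{8} + O(\theta^4).
\]
Taking square roots via $\sqrt{2+x} = \sqrt{2}\,(1 + x/4 + O(x^2))$ then yields
\[
\sqrt{1+\sec(\theta/2)} = \sqrt{2} + \tfrac{\sqrt{2}}{32}\theta^2 + o(\theta^2),
\]
which is exactly the lower asymptotic claimed.

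For the upper bound, $\sin(\theta/2) = \theta/2 + O(\theta^3)$ and $1/\cos(\theta/2) = 1 + O(\theta^2)$, so
\[
\sqrt{1+\sin(\theta/2)} = 1 + \tfrac{\theta}{4} + O(\theta^2),\qquad \tfrac{\sqrt 2}{\cos(\theta/2)} = \sqrt{2} + O(\theta^2),
\]
and multiplying these expansions gives
\[
\tfrac{\sqrt{2}}{\cos(\theta/2)}\sqrt{1+\sin(\theta/2)} = \sqrt{2} + \tfrac{\sqrt{2}}{4}\theta + o(\theta),
\]
matching the upper asymptotic. Combining the two expansions finishes the corollary.

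There is really no obstacle here; both bounds are elementary and everything reduces to keeping track of the correct order in the Taylor expansions (the lower bound is $O(\theta^2)$ because the leading $\theta$-term vanishes for the symmetric quantity $\sec(\theta/2)$, while the upper bound has a nonvanishing $O(\theta)$ contribution coming from $\sqrt{1+\sin(\theta/2)}$). The only mild care needed is to keep the error terms consistent on each side so that the final $o(\theta^2)$ and $o(\theta)$ forms are justified.
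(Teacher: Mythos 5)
Your proposal is correct and follows exactly the route the paper takes: the corollary is obtained by substituting $\varphi=\pi-\theta$ into the bounds of Propositions~\ref{Ebnd} and \ref{Ebnd2} and Taylor-expanding, and your expansions (including the coefficients $\tfrac{\sqrt{2}}{32}$ and $\tfrac{\sqrt{2}}{4}$) check out. The paper merely states that the expansions follow from those two propositions, so you have simply supplied the elementary details it omits.
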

\begin{proof}
The expansions in this corollary follow from the bounds in Propositions~\ref{Ebnd} and \ref{Ebnd2}.
\end{proof}
\begin{remark}
Note that upper and lower bounds in Corollary~\ref{cor:asymp} have different order of convergence to $\sqrt{2}$ and exact asymptotics of $\cE(\Omega_{\pi -\theta})$ in the limit $\theta\rightarrow 0+$ remains an open problem.
\end{remark}
\begin{remark}
In \cite[Subsection 7.3]{E08} it is conjectured that
\[
F_{\partial\Omega_{\pi - \theta}}(\alpha) = -\frac{1}{4}\alpha^2 -c'\alpha^2\theta^4 + O(\theta^5),\quad \theta\rightarrow 0+,
\]
with some constant $c' > 0$.
Using this conjecture, Lemma~\ref{LP}\,(i) and Theorem~\ref{thm:bnd2} one gets
the asymptotic lower bound
\[
\cE(\Omega_{\pi -\theta}) \ge \sqrt{2} + \frac{\sqrt{2}}{16}\theta^2 + o(\theta^2),\quad \theta \rightarrow 0+,
\]
which is a bit sharper than the bound in Corollary~\ref{cor:asymp}.
\end{remark}
\subsection{Extension operators on rectangles}
\label{ssec:example2}
In this subsection our aim is to illustrate obtained general estimates in the case of rectangles, which are Lipschitz domains of finite measure.
Let
\begin{equation}
\label{rectangle}
\Pi_{a,b} = (0,a)\times (0,b)\subset \dR^2
\end{equation}
be a rectangle with
the lengths of the edges $a > 0$ and $b > 0$, respectively, see Figure~\ref{fig2}.
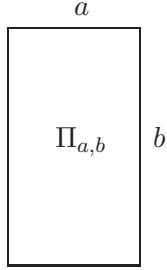
\begin{figure}[H]
\begin{center}
\begin{picture}(200,120)
\put(75,100){\line(1,0){50}}
\put(75,10){\line(1,0){50}}
\put(75,100){\line(0,-1){90}}
\put(125,100){\line(0,-1){90}}
\put(100,105){$a$}
\put(130,55){$b$}
\put(93,55){$\Pi_{a,b}$}
%\put(211,12){\small{$ \varphi$}}
\end{picture}
\end{center}
\caption{A rectangle $\Pi_{a,b}\subset\dR^2$ with the edges of lengths $a,b>0$.}
\label{fig2}
\end{figure}
We show how simple one can estimate $\cE(\Pi_{a,b})$ from below using our methods. First we collect and prove some properties of the functions $F_{\Pi_{a,b}}$ and $F_{\dR^2\setminus\ov{\Pi_{a,b}}}$.
\begin{lem}
\label{lem:FP}
Let $\Pi_{a,b}$ be a rectangle as in \eqref{rectangle} with $a, b>0$. 
Let the functions $F_{\Pi_{a,b}}$ and $F_{\dR^2\setminus\ov{\Pi_{a,b}}}$ be defined as in \eqref{fOmega}. Then the following statements hold:
\begin{itemize}\setlength{\parskip}{1.2mm}
\item [\rm (i)] $F_{\Pi_{a,b}}(\beta) \le -\beta\big(\frac{2}{a}+\frac{2}{b}\big)$;
\item [\rm (ii)] $F_{\dR^2\setminus\ov{\Pi_{a,b}}}(\beta) \ge -\beta^2$.
\end{itemize}
\end{lem}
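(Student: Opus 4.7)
The plan is to handle parts (i) and (ii) separately. Part (i) reduces to a single test-function computation, while (ii) requires a domain decomposition combined with a half-space trace inequality on each piece.

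For (i), I would insert the constant test function $f\equiv 1$ on $\Pi_{a,b}$ into the variational characterisation \eqref{fOmega}. Since $\nabla 1\equiv 0$, the squared $L^2$-norm equals the area $ab$, and the squared $L^2$-norm of the trace on $\partial\Pi_{a,b}$ equals the perimeter $2a+2b$. The resulting Rayleigh quotient is precisely $-2\beta(a+b)/(ab)=-\beta(2/a+2/b)$, which is the claimed upper bound.

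For (ii), the idea is to partition $\dR^2\setminus\ov{\Pi_{a,b}}$ by prolonging the four sides of the rectangle to full lines. Up to a set of measure zero this decomposes the exterior into four semi-infinite strips $S_1,\dots,S_4$ of finite width, each touching $\partial\Pi_{a,b}$ along exactly one side $\Sigma_i$, together with four corner quadrants $C_1,\dots,C_4$ that touch $\partial\Pi_{a,b}$ only at the vertices. Any $f\in H^1(\dR^2\setminus\ov{\Pi_{a,b}})$ restricts to an $H^1$-function on each piece, the Dirichlet integral and the $L^2$-norm are additive across the partition, and the trace term splits as $\|f|_{\partial\Pi_{a,b}}\|^2_{L^2(\partial\Pi_{a,b})} = \sum_{i=1}^4 \|f|_{\Sigma_i}\|^2_{L^2(\Sigma_i)}$.

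On each strip I would derive a half-space-type trace inequality by applying the fundamental theorem of calculus in the direction perpendicular to $\Sigma_i$: for $S_1 = (0,a)\times(-\infty,0)$ and $\Sigma_1=(0,a)\times\{0\}$, one writes $|f(x_1,0)|^2 = -\int_{-\infty}^0 \partial_{x_2}|f(x_1,x_2)|^2\,\drm x_2$, integrates in $x_1$, and applies Cauchy--Schwarz together with the weighted AM-GM inequality $2xy\le \beta^{-1}x^2+\beta y^2$. This should produce $\beta\|f|_{\Sigma_i}\|^2_{L^2(\Sigma_i)}\le \|\nabla f\|^2_{L^2(S_i;\dC^2)}+\beta^2\|f\|^2_{L^2(S_i)}$. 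On each corner quadrant the bound $\|\nabla f\|^2_{L^2(C_j;\dC^2)}+\beta^2\|f\|^2_{L^2(C_j)}\ge 0$ is trivial, and summing all eight inequalities yields $\frt_\beta^{\dR^2\setminus\ov{\Pi_{a,b}}}[f,f]\ge -\beta^2\|f\|^2_{L^2(\dR^2\setminus\ov{\Pi_{a,b}})}$ as required.

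The main technical obstacle is justifying the trace identity on the semi-infinite strip: the boundary term at $x_2=-\infty$ must vanish, which requires that $\int_0^a |f(x_1,-T)|^2\,\drm x_1\to 0$ along some sequence $T_n\to\infty$. This follows for $f\in L^2(S_1)$ by selecting $T_n$ so that the fiberwise $L^2$-norm tends to zero, which is possible because $\int_0^\infty \bigl(\int_0^a|f(x_1,-T)|^2\,\drm x_1\bigr)\drm T = \|f\|^2_{L^2(S_1)}<\infty$; the rest is routine bookkeeping.
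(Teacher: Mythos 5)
Your proof is correct, and for part (i) it is literally the paper's computation: the constant (characteristic) function on $\Pi_{a,b}$ gives the Rayleigh quotient $-\beta(2a+2b)/(ab)$. For part (ii) you use the same partition of $\dR^2\setminus\ov{\Pi_{a,b}}$ into four semi-infinite strips and four corner quadrants, i.e.\ the same Neumann-bracketing strategy; the only genuine difference is how the per-strip lower bound is justified. The paper passes to the decoupled form on $\bigoplus_k H^1(\Omega_k)$, invokes the form-ordering theorem of Birman--Solomjak, and identifies the bottom of the spectrum of each strip component as $-\beta^2$ by separation of variables (the $1$D Robin half-line operator has ground state $-\beta^2$). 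You instead prove the equivalent inequality $\beta\|f|_{\Sigma_i}\|^2_{L^2(\Sigma_i)}\le\|\nabla f\|^2_{L^2(S_i;\dC^2)}+\beta^2\|f\|^2_{L^2(S_i)}$ directly by the fundamental theorem of calculus, Cauchy--Schwarz and weighted AM--GM; this is more elementary and self-contained, at the cost of having to verify the vanishing of the boundary term at infinity, which you do correctly. One trivial slip: the identity should read $|f(x_1,0)|^2=\int_{-\infty}^{0}\partial_{x_2}|f(x_1,x_2)|^2\,\drm x_2$ without the leading minus sign (for the strip below $\Sigma_1$ oriented as you describe), but this is immaterial since Cauchy--Schwarz is applied to the absolute value anyway.
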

\begin{proof}
In the proof we convent to write $\Pi$ instead of $\Pi_{a,b}$.

\noindent (i) Let us take the characteristic function $\chi_{\Pi}$
of the domain $\Pi$. Clearly, we have $\chi_{\Pi}\in H^1(\Pi)$ and
\[
\frt_\beta^{\Pi}[\chi_{\Pi},\chi_{\Pi}] = -\beta(2a+2b),
\] 
where we used that $\nabla \chi_{\Pi} = 0$ and that $|\partial \Pi| = 2a+ 2b$. Note that $\|\chi_{\Pi}\|_{L^2}^2 = ab$ and that
\[
\inf\sigma(-\Delta_\beta^\Pi) \le \frac{\frt_{\beta}^\Pi[\chi_{\Pi},\chi_\Pi]}{\|\chi_\Pi\|_{L^2(\dR^2)}^2} \le -\beta\big(\tfrac{2}{a}+\tfrac{2}{b}\big),
\]
and the claim is proven.

\noindent (ii) Let us split the domain $\dR^2\setminus\ov{\Pi}$ into the partition $\cP = \{\Omega_k\}_{k=1}^8$ as shown on Figure~\ref{fig3}.
\begin{figure}[H]
\begin{center}
\begin{picture}(200,200)
\put(75,150){\line(1,0){50}}
\put(75,150){\line(1,0){50}}
\put(75,60){\line(1,0){50}}
\put(75,150){\line(0,-1){90}}
\put(125,150){\line(0,-1){90}}
%\put(100,155){$a$}
%\put(130,105){$b$}
\put(97,105){$\Pi$}
\put(97,170){$\Omega_1$}
\put(147,100){$\Omega_2$}
\put(97,40){$\Omega_3$}
\put(52,100){$\Omega_4$}
\put(52,170){$\Omega_5$}
\put(52,40){$\Omega_8$}
\put(147,170){$\Omega_6$}
\put(147,40){$\Omega_7$}

\multiput(75, 150)(0, 5 ){10}{\line(0,1){2}}
\multiput(125, 150)(0, 5 ){10}{\line(0,1){2}}
\multiput(75, 60)(0, -5 ){10}{\line(0,-1){2}}
\multiput(125, 60)(0, -5 ){10}{\line(0,-1){2}}
\multiput(75, 150)(-5, 0 ){10}{\line(-1,0){2}}
\multiput(125, 150)(5, 0 ){10}{\line(1,0){2}}
\multiput(75, 60)(-5, 0 ){10}{\line(-1,0){2}}
\multiput(125, 60)(5, 0 ){10}{\line(1,0){2}}

%\put(211,12){\small{$ \varphi$}}
\end{picture}
\end{center}
\caption{Partition $\cP = \{\Omega_k\}_{k=1}^8$ of $\dR^2\setminus\ov{\Pi}$ }
\label{fig3}
\end{figure}
\noindent We use the notation $f_k := f|_{\Omega_k}$. Consider the sesquilinear form
\[
\begin{split}
\frt^{\dR^2\setminus{\ov \Pi}}_{\beta,\rm N}[f,g] &:= \sum_{k=1}^8(\nabla f_k,\nabla g_k)_{L^2(\Omega_k;\dC^2)} - \beta(f|_{\partial\Pi}, g|_{\partial\Pi})_{L^2(\partial\Pi)},\\ 
\dom  \frt^{\dR^2\setminus{\ov \Pi}}_{\beta,\rm N} &:= \bigoplus_{k=1}^8 H^1(\Omega_k).
\end{split}
\]
The form above is clearly, closed, densely defined, lower-semibounded and symmetric. It generates a self-adjoint operator $-\Delta^{\dR^2\setminus{\ov \Pi}}_{\beta, \rm N}$, which is an orthogonal sum of $8$ self-adjoint operators acting in $L^2(\Omega_k)$ with $k=1,2\dots,8$, respectively. The spectra of the components  corresponding to $\Omega_5$, $\Omega_6$, $\Omega_7$ and $\Omega_8$ are equal to $[0,+\infty)$. The spectra of the components corresponding to $\Omega_1$, $\Omega_2$, $\Omega_3$ and $\Omega_4$ are equal to $[-\beta^2,+\infty)$, which can be seen from separation of variables on these domains.
Hence, we get
\[
\sigma(-\Delta^{\dR^2\setminus{\ov \Pi}}_{\beta, \rm N}) = [-\beta^2,+\infty).
\]
Note that the ordering
\[
\frt^{\dR^2\setminus{\ov \Pi}}_{\beta,\rm N}\subset \frt^{\dR^2\setminus{\ov \Pi}}_{\beta}
\]
holds in the sense of \cite[\S VI.5]{Kato} and of \cite[\S 10.2]{BS87} and the estimate
\[
F_{\dR^2\setminus{\ov \Pi}}(\beta) =\inf\sigma(-\Delta_\beta^{\dR^2\setminus\ov{\Pi}}) \ge\inf\sigma(-\Delta_{\beta, \rm N}^{\dR^2\setminus\ov{\Pi}}) =-\beta^2
\]
follows with the help of \cite[\S 10.2, Theorem 4]{BS87}.
\end{proof}
\begin{prop}
Let $\Pi_{a,b}$ be a rectangle as in \eqref{rectangle}, and let the value $\cE(\Pi_{a,b})$ be defined as in \eqref{alpha1}.
Then the following estimate
\[
\cE(\Pi_{a,b}) \ge \sqrt{1 + 2\Big(\frac{1}{a}+\frac{1}{b}\Big)}
\]
holds.
\end{prop}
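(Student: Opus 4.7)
The plan is to combine Theorem~\ref{thm:bnd1} with the two spectral estimates from Lemma~\ref{lem:FP}. Since Theorem~\ref{thm:bnd1} gives
\[
\cE(\Pi_{a,b}) \ge \sqrt{1 + \frac{\beta(\dR^2\setminus\ov{\Pi_{a,b}})}{\beta(\Pi_{a,b})}},
\]
it suffices to bound $\beta(\Pi_{a,b})$ from above and $\beta(\dR^2\setminus\ov{\Pi_{a,b}})$ from below; the monotonicity and continuity of $F_\Omega$ established in Proposition~\ref{prop:f} translate these spectral inequalities into inequalities for the roots of $F=-1$.

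First, I would use Lemma~\ref{lem:FP}\,(i). Setting $\beta_0 := \frac{ab}{2(a+b)}$, the inequality $F_{\Pi_{a,b}}(\beta_0) \le -\beta_0(\tfrac{2}{a}+\tfrac{2}{b}) = -1$ holds. Since $F_{\Pi_{a,b}}$ is non-increasing with $F_{\Pi_{a,b}}(\beta(\Pi_{a,b})) = -1$, we conclude $\beta(\Pi_{a,b}) \le \beta_0 = \frac{ab}{2(a+b)}$.

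Next, I would use Lemma~\ref{lem:FP}\,(ii). The defining equation $F_{\dR^2\setminus\ov{\Pi_{a,b}}}(\beta(\dR^2\setminus\ov{\Pi_{a,b}})) = -1$ together with the bound $F_{\dR^2\setminus\ov{\Pi_{a,b}}}(\beta) \ge -\beta^2$ gives $-1 \ge -\beta(\dR^2\setminus\ov{\Pi_{a,b}})^2$, hence $\beta(\dR^2\setminus\ov{\Pi_{a,b}}) \ge 1$.

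Combining the two yields
\[
\frac{\beta(\dR^2\setminus\ov{\Pi_{a,b}})}{\beta(\Pi_{a,b})} \ge \frac{2(a+b)}{ab} = 2\Big(\frac{1}{a} + \frac{1}{b}\Big),
\]
and inserting this into the bound from Theorem~\ref{thm:bnd1} completes the proof. There is no real obstacle here; the content is entirely in Lemma~\ref{lem:FP}, and the only care needed is to verify that the direction of each inequality is the one required to produce a \emph{lower} bound on the ratio.
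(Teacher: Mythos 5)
Your proposal is correct and follows essentially the same route as the paper: it combines Theorem~\ref{thm:bnd1} with the two bounds of Lemma~\ref{lem:FP}, using the monotonicity of $F_{\Pi_{a,b}}$ to convert $F_{\Pi_{a,b}}(\beta_0)\le -1$ into $\beta(\Pi_{a,b})\le \frac{ab}{2(a+b)}$ and the bound $F_{\dR^2\setminus\ov{\Pi_{a,b}}}(\beta)\ge -\beta^2$ into $\beta(\dR^2\setminus\ov{\Pi_{a,b}})\ge 1$. No gaps.
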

\begin{proof}
From monotonicity of $F_{\Pi_{a,b}}$ proven in Proposition~\ref{prop:f}\,(i) and from Lemma~\ref{lem:FP}\,(i) we obtain that
\begin{equation}
\label{beta3}
\beta(\Pi_{a,b}) \le \frac{ab}{2a+2b}.
\end{equation}
Analogously, we get with the aid of Lemma~\ref{lem:FP}\,(ii) that
\begin{equation}
\label{beta4}
\beta(\dR^2\setminus\ov{\Pi_{a,b}}) \ge 1.
\end{equation}
Using Theorem~\ref{thm:bnd1} and estimates \eqref{beta3}, \eqref{beta4}  we arrive at
\[
\cE(\Pi_{a,b}) \ge \sqrt{1+ \frac{\beta(\dR^2\setminus\ov{\Pi_{a,b}})}{\beta(\Pi_{a,b})}} \ge\sqrt{1 + 2\Big(\frac{1}{a}+\frac{1}{b}\Big)},
\]
that finishes the proof.
\end{proof}
\begin{remark}
The method given in this subsection extends easily to parallelepipeds
\[
\Pi_{a_1,a_2,\dots, a_d} = (0,a_1)\times(0,a_2)\times\dots\times(0,a_d)\subset\dR^d.
\]
\end{remark}

\end{document}